\newtheorem{thm}{Theorem}
\newtheorem{cor}[thm]{Corollary}
\newtheorem{lemma}[thm]{Lemma}
\newtheorem{remark}[thm]{Remark}
\newtheorem{defn}[thm]{Definition}
\newtheorem{ex}[thm]{Example}
\begin{document}
\title{The Separation Principle in\\ Stochastic Control, Redux\thanks{This research was supported by grants from
AFOSR, NSF, VR, SSF and the G\"oran Gustafsson Foundation.} }
\author{Tryphon T. Georgiou\thanks{T. T.  Georgiou is with the Department of Electrical \& Computer Engineering,
University of Minnesota, Minneapolis, Minnesota 55455, USA, {\tt tryphon@umn.edu}}
 and Anders Lindquist\thanks{A. Lindquist is with the Department of  Automation, Shanghai Jiao Tong Univerity, Shanghai, China, and the Center for Industrial and Applied Mathematics (CIAM) and the ACCESS Linnaeus Center, 
 Royal Institute of Technology, 100 44 Stockholm, Sweden, {\tt alq@kth.se}}}
\markboth{}
{Georgiou and Lindquist: Separation Principle}
\maketitle

\def\spacingset#1{\def\baselinestretch{#1}\small\normalsize}

\begin{abstract}
Over the last 50 years a steady stream of accounts have been written on the separation principle of stochastic control. Even in the context of the  linear-quadratic  regulator in continuous time with Gaussian white noise, subtle difficulties arise, unexpected by many, that are often overlooked.  In this paper we propose a new framework for establishing the separation principle. This approach takes the viewpoint that stochastic systems are well-defined maps between sample paths rather than stochastic processes {\em per se} and allows us to extend the separation principle to systems driven by martingales with possible jumps. While the approach is more in line with ``real-life'' engineering thinking where signals travel around the feedback loop, it is unconventional from a probabilistic point of view in that control laws 
for which the feedback equations are satisfied almost surely, and not deterministically for every sample path, are excluded.
 \end{abstract}

\section{Introduction}\label{sec:introduction}

\PARstart{O}{ne} of the most fundamental principles of feedback theory is that the problems of optimal control and state estimation can be decoupled in certain cases. This is known as the {\em separation principle}. The concept  was coined early on in  \cite{JosephTou,potter} and is closely connected to the idea of {\em certainty equivalence}; see, e.g., \cite{vanWaterWillems}. In studying the literature on the separation principle of stochastic control, one is struck by the level of sophistication and technical complexity. The source of the difficulties can be traced to the 
circular dependence between control and observations.
The goal of this paper is to present a rigorous approach to the separation principle in continuous time which is rooted in the engineering view of systems as maps between signal spaces.

The most basic setting begins with a linear system
\begin{equation}
\label{eq:system}
\begin{cases}
dx=A(t)x(t)dt+B_1(t)u(t)dt+B_2(t)dw\\
dy=C(t)x(t)dt +D(t)dw
\end{cases}
\end{equation}
with a state process $x$, an output process $y$ and a control $u$,
where $w(t)$ is a vector-valued Wiener process, $x(0)$ is a zero-mean Gaussian random vector independent of $w(t)$, $y(0)=0$, and $A$, $B_1$, $B_2$, $C$, $D$ are matrix-valued functions of compatible dimensions, which we take to be continuous of bounded variation. Moreover, $DD'$ is  nonsingular on the interval $[0,T]$, and if we want the noise processes in the state and output equations to be independent, as often is assumed but not required here, we take $B_2D'\equiv 0$. 
All random variables and processes are defined over a common complete probability space $(\Omega, {\mathcal F}, \mathds{P})$. 

The control problem is to design an output feedback law
\begin{equation}
\label{eq:pi}
\pi \;:\; y \mapsto u
\end{equation}
over the window $[0,T]$ which maps the observation process $y$ to the control input $u$,
in a nonanticipatory manner, so that the value of the functional
\begin{equation}
\label{eq:functional}
J(u) = E\left\{ \int_0^T x(t)'Q(t)x(t)dt+\int_0^Tu(t)'R(t)u(t)dt +x(T)'Sx(T)\right\}
\end{equation}
is minimized,  where $Q$ and $R$ are continuous matrix functions of bounded variation, $Q(t)$ is positive semi-definite and $R(t)$ is positive definite for all $t$. How to choose the admissible class of control laws $\pi$ has been the subject of much discussion in the literature \cite{lindquist}. The conclusion, under varying conditions, has been that $\pi$ can be chosen to be linear in the data and, more specifically, in the form
\begin{equation}
\label{eq:K}
u(t)=K(t)\hat x(t),
\end{equation}
where $\hat x(t)$ is the Kalman estimate of the state vector $x(t)$ obtained from the Kalman filter
\begin{eqnarray}
\label{eq:Kalmanfilter}
d\hat x=A(t)\hat x(t)dt+B_1(t)u(t)dt +L(t)(dy-C(t)\hat x(t)dt),\quad \hat x(0)=0,
\end{eqnarray}
and the gains $K$ and $L$ computed by solving to a pair of dual Riccati equations.

A result of this kind is far from obvious, and the early literature was marred by treatments of the separation principle where the non-Gaussian element introduced by an {\em a priori\/} nonlinear control law $\pi$ was overlooked. The subtlety lies in excluding the possibility that a nonlinear controller extracts more information from the data than it is otherwise possible. This point will be explained in detail in Section \ref{historysec}, where a brief historical account of the problem will be given. Early expositions of the separation principle often fall in one of two categories: either the subtle issues are overlooked and inadmissible shortcuts are taken; or the treatment is mathematically quite sophisticated and technically very demanding. The short survey  in Section \ref{historysec} will thus serve the purpose of introducing the theoretical challenges at hand, as well as setting up notation.

In this paper we take the point of view that feedback laws \eqref{eq:pi} should act on sample paths of the  stochastic process $y$ rather than on the process itself. This is motivated by engineering thinking where systems and feedback loops process signals. Thus, our {\em key\/} assumption on admissible control laws \eqref{eq:pi} is that the resulting feedback loop is {\em deterministically well-posed\/} in the sense that the feedback equations admit a unique solution path-wise which causally depends on the input. For this class of control laws we prove that the  separation principle stated above holds and moreover that it extends to systems driven by general martingale noise. 
More precisely, in this non-Gaussian situation the Wiener process $w$ in \eqref{eq:system} is replaced by an arbitrary martingale process with possible jumps such as  a Poisson process martingale; see, e.g., \cite[p. 87]{Klebaner}. 
Then, we only need to exchange the (linear) Kalman estimate $\hat{x}$ by the strict sense conditional mean
\begin{equation}
\label{eq:xhat}
\hat{x}(t)=E\{ x(t)\mid {\cal Y}_t\},
\end{equation}
where 
\begin{equation}
\label{eq:filtration}
{\cal Y}_t:=\sigma\{ y(\tau), \tau\in [0,t]\}, \quad 0\leq t\leq T,
\end{equation}
is the {\em filtration\/} generated by the output process; i.e., the family of increasing sigma fields representing the data as it is produced. The estimate $\hat x$ needs to be defined with care so that it constitutes a sufficiently regular stochastic process and realized by a map acting on observations \cite[page 17]{BainCrisan}, \cite{CC}.
Unfortunately, the results in the present paper come at a cost since our key assumption of well-posedness excludes control laws for which the feedback system fails to be defined sample-wise. Existence of strong solutions of the feedback equations is not enough to ensure well-posedness in our sense as we will discuss below. In addition, the condition of deterministic well-posedness is often difficult to verify. Yet, besides the fact that we prove the separation principle for general martingale noise, the sample-wise viewpoint provides a simple explanation of why the separation principle may hold in the first place.

Before proceeding we recast the system model \eqref{eq:system} in an integrated form which allows similar conclusions for more general linear systems in a unified setting. To this end, let 
\[z(t)=\begin{pmatrix}
      x(t)    \\
      y(t) 
\end{pmatrix}.\]
System \eqref{eq:system} can now be expressed in the form
\begin{equation}
\label{eq:systemint}
\begin{cases}
z(t)=z_0(t)+\int_0^tG(t,\tau)u(\tau)d\tau\\
y(t)=Hz(t),
\end{cases}
\end{equation}
where  $z_0$ is the process $z$ obtained by setting $u=0$ and $G$ is a Volterra kernel.  This integrated form encompasses a considerably wider class of controlled linear systems which includes delay-differential equations, following \cite{lindquist1,lindquist}, which will be taken up in Section~\ref{delaysec}. 
\begin{figure}[htb]\begin{center}
\psfrag{p1}{$\hspace*{-7pt}\begin{array}{cc}\\[-.12in]\pi \end{array}$}
\psfrag{xx}{$z$}
\psfrag{yy}{$\;y$}
\psfrag{x0}{$z_0$}
\psfrag{plus}{$+$}
\psfrag{g1}{$g$}
\psfrag{h1}{$\hspace*{-9pt}\begin{array}{cc}\\[-.12in]H \end{array}$}
\psfrag{uu}{$u$}
\includegraphics[totalheight=3.5cm]{./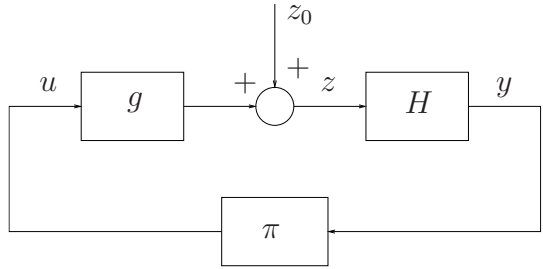}
\caption{A feedback interconnection.}\label{fig1}\end{center} \end{figure}
The corresponding feedback configuration is shown in Figure \ref{fig1} where $g$ represents the Volterra operator
\begin{equation}
\label{G2g}
g\;:\; (t,u) \mapsto \int_0^t G(t,\tau)u(\tau)d\tau,
\end{equation}
and $H$ is a constant matrix. As usual, Figure \ref{fig1} is a graphical representation of  the algebraic relationship
\begin{equation}
\label{z02z}
z=z_0+g\pi H z.
\end{equation}
For the particular model in \eqref{eq:system}, $H=[0,I]$, but in general $H$ could be any matrix or linear system. Setting $z:=x$ and $H=I$ we obtain the special case of complete state information.

In a stochastic setting, the feedback equation \eqref{z02z} is said to have a unique strong solution if there exists a non-anticipating function $F$ such that $z=F(z_0)$ satisfies  \eqref{z02z} with probability one and all other solutions coincide with $z$ with probability one.
It is important to note that in our sample-wise setting we require more, namely that such a unique solution exists and that \eqref{z02z} holds for {\em all\/} $z_0$, not only ``almost all.'' Consequences of this requirement will be further elaborated upon below.

The outline of the paper is as follows. In Section~\ref{historysec} we begin by reviewing the standard quadratic regulator problem and pointing our the subtleties created by possible nonlinearities in the control law. We then review several strategies in the literature to establish a separation principle, chiefly restricting the class of admissible controls.  Section~\ref{signalssec} defines notions of signals and systems  used in our framework, and in Section~\ref{wellposedsec} we establish necessary conditions  for a feedback loop to make sense and deduce a basic fact about propagation of information in the loop through linear components. It Section~\ref{separationsec} we state and prove our main results on the separation principle for linear-quadric regulator problems, allowing also for more general martingale noise. Finally, in Section~\ref{delaysec} we prove a separation theorem for delay systems with Gaussian martingale noise. 

\section{Historical remarks}\label{historysec}

A common approach to establishing the basic separation principle stated at the beginning of Section \ref{sec:introduction} is a completion-of-squares argument similar to the one used in deterministic linear-quadratic-regulator theory; see e.g. \cite{Astrom}. For  ease of reference, we briefly review this contruction.  Given the system \eqref{eq:system} and the solution of the matrix Riccati equation
\begin{subequations}\label{optimalcontrol}
\begin{equation}
\label{eq:Riccati}
\dot{P}=-A'P-PA+PB_1R^{-1}B_1'P-Q, \quad
P(T)=S.
\end{equation}
 It\^o's differential rule (see, e.g., \cite{Klebaner,Oksendal}) yields
\begin{equation*}
\label{eq:Ito}
d(x'Px)=x'\dot{P}xdt+2x'Pdx+ {\rm tr}(B_2'PB_2)dt,
\end{equation*}
where ${\rm tr}(M)$ denotes the trace of the matrix $M$. Then from  \eqref{eq:system} and  \eqref{eq:Riccati} it readily follows that 
\begin{displaymath}
d(x'Px)=[-x'Qx-u'Ru+ (u-Kx)'R(u-Kx)]dt
+{\rm tr}(B_2'PB_2)dt + 2x'PB_2dw,
\end{displaymath}
where
\begin{equation}
\label{eq:gain}
K(t):=-R(t)^{-1}B_1(t)'P(t).
\end{equation}
\end{subequations}
Integrating this from $0$ to $T$ and taking mathematical expectation, we obtain the following expression for the cost functional \eqref{eq:functional}:
\begin{eqnarray}
\label{eq:J(u)}
J(u)&=&E\left\{ x(0)'P(0)x(0) +\int_0^T(u-Kx)'R(u-Kx)dt\right\}
+\int_0^T{\rm tr}(B_2'PB_2)dt.
\end{eqnarray}
To ensure that $\int_0^Tx'PB_2dw$ has zero expectation, we need to check that the integrand is square integrable almost surely.
It is clear that $u$ is square integrable for otherwise $J(u)=\infty$. Then the state process
\begin{equation}
\label{u2x}
x(t)=x_0(t)+\int_0^t \Phi(t,s)B_1(s)u(s)ds
\end{equation}
 is square integrable as well. 
Here  $x_0$ is the (square integrable) state process  corresponding to $u=0$, and $\Phi(t,s)$ is the transition matrix function of the system \eqref{eq:system}.

Now, if we had complete state information with  \eqref{eq:system} replaced by  
\begin{equation}
\label{completeinfsystem}
\begin{cases}
dx=A(t)x(t)dt+B_1(t)u(t)dt+B_2(t)dw\\
y=x
\end{cases}
\end{equation}
we could immediately conclude that the feedback law 
\begin{equation}
\label{completeinformation:pi}
u(t)=K(t)x(t)
\end{equation}
is optimal, because the last term in \eqref{eq:J(u)} does not depend on the control. However, when we have incomplete state information with the control being a function of the observed process $\{y(s); 0\leq s\leq t\}$, things become more complicated. Mathematically we formalize this by having  any control process adapted to the filtration \eqref{eq:filtration}; i.e., having $u(t)$ ${\cal Y}_t$-measurable for each $t\in [0,T]$.  Then, setting
\begin{equation}
\label{eq:xtilde}
\tilde{x}(t):= x(t)-\hat{x}(t)
\end{equation}
with $\hat{x}$ given by \eqref{eq:xhat},
we have $E\{ [u(t)-K(t)\hat{x}(t)]\tilde{x}(t)'\}=0,$ and therefore 
\begin{equation}\label{KRKSigma}
E\int_0^T(u-Kx)'R(u-Kx)dt =E\int_0^T[(u-K\hat{x})'R(u-K\hat{x})+{\rm tr}(K'RK\Sigma)]dt,
\end{equation}
where $\Sigma$ is the covariance matrix
\begin{equation}
\label{eq:Sigma}
\Sigma(t):=E\{\tilde{x}(t)\tilde{x}(t)'\}.
\end{equation}
A common mistake\label{mistake} in the early literature on the separation principle is to assume {\em without further investigation\/} that $\Sigma$ does not depend on the choice of control. Indeed, if this were the case, it would follow directly that \eqref{eq:J(u)} is minimized by choosing the control as \eqref{eq:K}, and the proof of the separation principle would be immediate. (Of course, in the end this will be the case under suitable conditions, but this has to be proven.) This mistake probably originates from the observation that the  control term  in \eqref{u2x}
cancels when forming \eqref{eq:xtilde} so that
\begin{equation}
\label{xtilde}
\tilde{x}(t)= \tilde{x}_0(t):= x_0(t)-\hat{x}_0(t),
\end{equation}
where 
\begin{equation}
\label{eq:xhat0}
\hat{x}_0(t):=E\{ x_0(t)\mid {\cal Y}_t\}.
\end{equation}
However, in this analysis, we have {\em not\/} ruled out that $\hat{x}_0$ depends on the control or, what would follow from this, that the filtration \eqref{eq:filtration} does. A detailed discussion of this conundrum can be found in \cite{lindquist}. In fact, since the control process $u$ is in general a {\em nonlinear\/} function of the data and thus non-Gaussian, then so is the output process $y$.\footnote{However, the model is  conditionally Gaussian given the filtration $\{{\mathcal Y}_t\}$; see Remark~\ref{condGaussianrem}.}  Consequently, the conditional expectation \eqref{eq:xhat0} might not in general coincide with the {\em wide sense\/} conditional expectation obtained by projections of the components of $x_0(t)$ onto the closed linear span of the components of $\{y(\tau), \tau\in [0,t]\}$, and therefore, {\em a priori},  it could happen that $\hat{x}$ is not generated by the Kalman filter \eqref{eq:Kalmanfilter}.

To avoid these problems one might begin by uncoupling the feedback loop as described in Figure~2, and determine an optimal control process in the class of stochastic processes  $u$ that are adapted to the family of sigma fields 
\begin{equation}
\label{Yzerofiltration}
 {\cal Y}_t^0:=\sigma\{ y_0(\tau), \tau\in [0,t]\}, \quad 0\leq t\leq T,
\end{equation}
 i.e., such that,  for each $t\in [0,T]$, $u(t)$ is a function of $y_0(s);\, 0\leq s\leq T$. Such a problem, where one optimizes over the class of all control processes adapted to a fixed filtration,  was called a {\em stochastic open loop (SOL) problem\/} in  \cite{lindquist}. In the literature on the separation principle it is not uncommon to assume from the outset that the control is adapted to $\{ {\mathcal Y}_t^0\}$; see, e.g., \cite[Section 2.3]{Bensoussan}, \cite{vanHandel,willems78}.
\begin{figure}[htb]\begin{center}
\psfrag{p1}{$\hspace*{-8pt}\begin{array}{cc}\\[-.1in]\pi \end{array}$}
\psfrag{xx}{$z$}
\psfrag{yy}{\hspace*{-1pt}$y$}
\psfrag{y0}{\hspace*{-6pt}$y_0$}
\psfrag{x0}{$z_0$}
\psfrag{plus}{$+$}
\psfrag{g1}{$\hspace*{-9pt}\begin{array}{cc}\\[-.14in]g \end{array}$}
\psfrag{h1}{$\hspace*{-9pt}\begin{array}{cc}\\[-.1in]H \end{array}$}
\psfrag{uu}{\hspace*{-3pt}$u$}
\includegraphics[totalheight=2.0cm]{./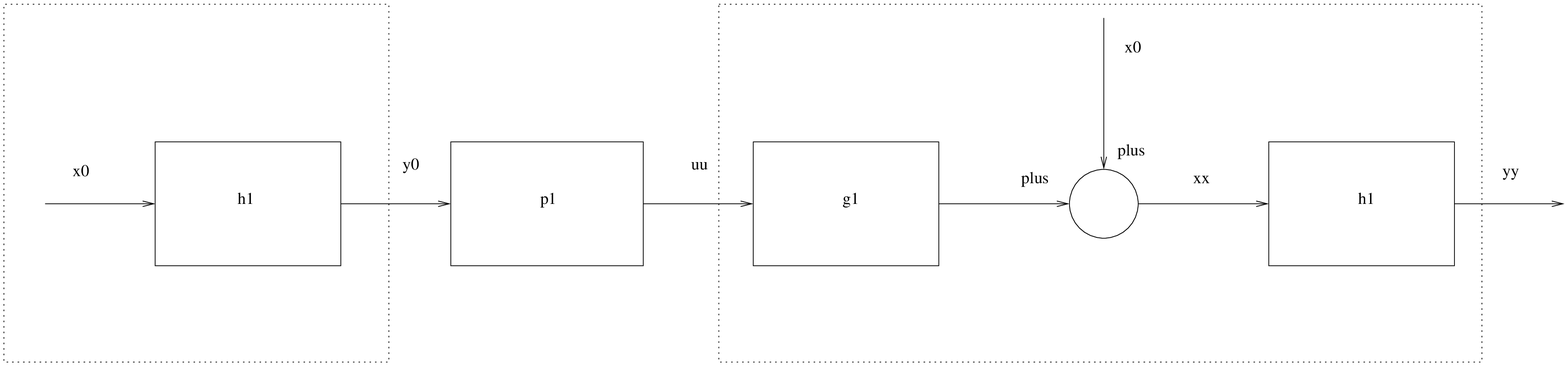}
\caption{A stochastic open loop (SOL) configuration.}\label{fig5}\end{center} \end{figure}

In \cite{lindquist} it was suggested how to embed the class of admissible controls in various SOL classes in a problem-dependent manner, and then construct the corresponding feedback law. More precisely, in the present context, the class of admissible feedback laws was taken to consist of the nonanticipatory functions $u:=\pi(y)$ such that the feedback loop 
\begin{equation}
\label{feedbackeq}
z=z_0+g\pi Hz
\end{equation}
has a unique solution $z_\pi$ and $u=\pi(Hz_\pi)$ is adapted to $\{{\mathcal Y}_t^0\}$. Next, we shall give a few examples of specific classes of feedback laws that belong to this general class. 

\begin{ex}
It is common to restrict the admissible class of control laws to contain only linear ones; see, e.g., \cite{davisbook}.  In a more general direction, let ${\cal L}$ be the class
\begin{equation}
\label{ }
({\cal L})\quad u(t)=\bar{u}_0(t)+\int_0^tF(t,\tau)dy,
\end{equation}
where $\bar{u}$ is a deterministic function and $F$ is an $L_2$ kernel. 
 In this way, the Gaussian property will be preserved, and $\hat{x}$ will be generated by the Kalman filter \eqref{eq:Kalmanfilter}. Then it follows from \eqref{eq:system} and \eqref{eq:Kalmanfilter} that $\tilde{x}$ is generated by
 \begin{displaymath}
d\tilde{x}=(A-LC)\tilde{x}dt +(B_2-LD)dw, \quad \tilde{x}(0)=x(0),
\end{displaymath} 
which is clearly independent of the choice of control. Then so is the error covariance \eqref{eq:Sigma}, as desired. 
Even in the more general setting described by \eqref{eq:systemint}, it was shown in \cite[pp. 95--96]{lindquist1} that  
\begin{equation}
\label{eq:calYconstant}
{\mathcal Y}_t = {\mathcal Y}_t^0, \quad t\in [0,T],
\end{equation}
for any $\pi\in {\cal L}$, where \eqref{Yzerofiltration}
is the filtration generated by the uncontrolled output process $y_0$ obtained by setting $u=0$ in \eqref{eq:systemint}.
\end{ex}

\begin{ex}
In his influential paper \cite{wonham},  Wonham proposed the class of control laws
\begin{equation}
\label{psidef}
u(t)=\psi(t,\hat{x}(t))
\end{equation}
in terms of the state estimate \eqref{eq:xhat}, where $\psi(t,x)$ is Lipschitz continuous in $x$. For pedagogical reasons, we  first highlight a somewhat more restrictive construction due to Kushner \cite{kushner1}.
Let 
\begin{displaymath}
\hat{\xi}_0(t):=E\{ x_0(t)\mid {\mathcal Y}_t^0\}
\end{displaymath}
be the Kalman state estimate of the uncontrolled system
\begin{equation}
\label{eq:uncontroledsystem}
\begin{cases}
dx_0=A(t)x_0(t)dt+B_2(t)dw\\
dy_0=C(t)x_0(t)dt +D(t)dw
\end{cases}
\end{equation}
Here we use the notation $\hat{\xi}_0$ to distinguish it from $\hat{x}_0$, defined by \eqref{eq:xhat0}, which {\em a priori\/} depends on the control. 
Then the Kalman filter takes the form
\begin{displaymath}
d\hat{\xi}_0=A\hat{\xi}_0(t)dt + L(t)dv_0, \; \hat{\xi}_0(0)=0
\end{displaymath}
where the innovation process
\begin{displaymath}
dv_0 = dy_0-C\hat{\xi}_0(t)dt, \; v_0(0)=0
\end{displaymath}
generates the same filtration, $\{ {\mathcal V}_t^0\}$, as $y_0$; i.e., $ {\mathcal V}_t^0={\mathcal Y}_t^0$ for $t\in [0,T]$. This is well-known, but a simple proof is given on page \pageref{innovequiv} in Section~\ref{delaysec} in a more general setting; see  \eqref{equivfiltrations}. Now, along the lines of \eqref{u2x}, define
\begin{displaymath}
\hat{\xi}(t)=\hat{\xi}_0(t)+ \int_0^t \Phi(t,s)B_1(s)u(s)ds,
\end{displaymath}
where the control is chosen as
\begin{equation}
\label{xihat2u}
u(t)=\psi(t,\hat{\xi}(t)).
\end{equation}
Since $\psi$ is Lipschitz,  $\hat{\xi}$ is the unique strong solution of the stochastic differential equation 
\begin{equation}
\label{xiSDE}
d\hat{\xi}=\big(A\hat{\xi}(t) +B_1\psi(t,\hat{\xi}(t))\big)dt + L(t)dv_0,\; \hat{\xi}(0)=0,
\end{equation}
and it is thus adapted to $\{ {\mathcal V}_t^0\}$ and hence to $\{{\mathcal Y}_t^0\}$; see, e.g., \cite[p. 120]{Klebaner}. Hence the selection \eqref{xihat2u} of control law forces $u$ to be adapted to $\{ {\mathcal Y}_t^0\}$, and hence, due to 
\begin{equation}
\label{dy02dy}
dy=dy_0 +\int_0^t C(t)\Phi(t,s)B_1(s)u(s)dsdt,
\end{equation}
obtained from \eqref{u2x}, ${\mathcal Y}_t\subset  {\mathcal Y}_t^0$ for $t\in [0,T]$.  However, since the control-dependent terms cancel,
\begin{displaymath}
dv_0= dy_0- C\hat{\xi}_0(t)dt= dy -C\hat{\xi}(t)dt,
\end{displaymath}
which inserted into \eqref{xiSDE} yields a stochastic differential equation, obeying the appropriate Lipschitz condition, driven by $dy$ and having $\hat{\xi}$ as a strong solution. Therefore,  $\hat{\xi}$ is adapted to $\{ {\mathcal Y}_t\}$, and hence, by \eqref{xihat2u}, so is $u$. Consequently, \eqref{dy02dy} implies that ${\mathcal Y}_t^0\subset {\mathcal Y}_t$  for $t\in [0,T]$ so that actually \eqref{eq:calYconstant} holds. Finally, this implies that $\hat{\xi}=\hat{x}$, and thus $u$ is given by \eqref{psidef}. However, it should be noted that the class of control laws \eqref{xihat2u} is a subclass of \eqref{psidef} as it has been constructed to make $u$ {\em a priori\/} adapted to $\{ {\mathcal Y}_t^0\}$. Therefore, the relevance of these results, presented in \cite{kushner1}, for the proof in \cite[page 348]{kushner} is unclear. In their popular textbook \cite{KwakernaakSivan}, widely used as a reference source for the validity of the separation principle over a general class of admissible (including nonlinear) controls,  Kwakernaak and Sivan prove the separation principle over a class  of  linear laws but claim with reference to \cite{kushner,kushner1}  that it holds ``without qualification" in general  \cite[p. 390]{KwakernaakSivan}. (However, see Remark~\ref{condGaussianrem} below.)

In his pioneering paper \cite{wonham}  Wonham proved the separation theorem for controls in the class   \eqref{psidef} even with a more general cost functional than \eqref{eq:functional}. However, the proof is far from simple and marred by many technical assumptions. A case in point is the assumption that $C(t)$ is square and has a determinant bounded away from zero, which is a serious restriction. A later proof by Fleming and Rishel  \cite{FlemingRishel} is considerably simpler. They also prove the separation theorem with quadratic cost functional  \eqref{eq:functional} for a class of Lipschitz continuous  feedback laws, namely 
\begin{equation}
\label{phidef}
u(t)=\phi(t,y),
\end{equation}
where $\phi:\, [0,T]\times C^n [0,T]\to{\mathbb R}^m$ is a nonanticipatory function of $y$ which is Lipschitz continuous in this argument. 
\end{ex}

\begin{ex}\label{ex3}
It is interesting to note that if there is a delay in the processing of the observed data so that, for each $t$, $u(t)$ is a function of $y(\tau); \, 0\leq\tau\leq t-\varepsilon$, then
\begin{equation}\nonumber
{\cal Y}_t = {\cal Y}_t^0, \quad t\in [0,T].
\end{equation}
To see this,  let $n$ be a positive integer, and suppose that ${\cal Y}_t = {\cal Y}_t^0$ for $t\in[0,n\varepsilon]$. 
Since $u(t)$ is ${\cal Y}_{t-\varepsilon}$-measurable on $[0,(n+1)\varepsilon]$,  it is at the same time ${\cal Y}_{t-\varepsilon}$  as well as ${\cal Y}_{t-\varepsilon}^0$-measurable. 
Then, since
 \begin{displaymath}
y(t)=y_0(t)+\int_0^t HG(t,s)u(s)ds,
\end{displaymath} 
it follows that ${\cal Y}_t = {\cal Y}_t^0$ for $t\in[0,(n+1)\varepsilon]$. Since ${\cal Y}_t = {\cal Y}_t^0$ for $t\in[0,\varepsilon]$, \eqref{eq:calYconstant} follows by induction.
\end{ex}

\begin{remark}
This is the reason why the problem with possibly control-dependent sigma fields does not occur in the usual discrete-time formulation. Indeed, in this setting, the error covariance \eqref{eq:Sigma} will not depend on the control, while, as we have mentioned,  some more analysis is needed to rule out that its continuous-time counterpart does. This invalidates a procedure used in several textbooks (see, e.g., \cite{stengel}) in which the continuous-time $\Sigma$ is constructed as the limit of finite difference quotients of the discrete-time $\Sigma$, which, as we have seen in Example~\ref{ex3}, does not depend on the control, and which simply is the solution of a discrete-time matrix Riccati equation. However, we cannot {\em a priori\/}  conclude that continuous-time $\Sigma$ satisfies  this  Riccati equation. For this we need \eqref{eq:calYconstant}, or alternatively  arguments such as in Remark~\ref{condGaussianrem}.  Otherwise the argument is circular. 
\end{remark}

\begin{remark}\label{varaiya}
Historically, a popular approach was introduced in Duncan and Varaiya, and  Davis and Varaiya  \cite{duncanvaraiya,davisvaraya} based on {\em weak solutions\/}  of the relevant stochastic differential equation. The driving noise is Wiener and the approach utilizes the Girsanov transformation
to recast the problem in a way so that the filtration of the observation process is independent of the input process (see \cite[Section 2.4]{Bensoussan}). Very briefly, by an appropriate  change of probability measure,
\begin{displaymath}
d\tilde{w}=B_1udt + B_2 dw
\end{displaymath}
can be transformed into a new Wiener process, which in the sense of weak solutions \cite{Klebaner} is the same as any other Wiener process.
In this way, the filtration $\{{\mathcal Y}_t\}$ can be fixed to be constant with respect to variations in the control.
In this paper we do not consider weak solutions since our observation process is not arbitrary from an applications point of view.
\end{remark}

\begin{remark}\label{condGaussianrem}
Yet another approach to the separation principle is based on the fact that, although \eqref{eq:system} with a nonlinear control is non-Gaussian, the model is {\em conditionally Gaussian\/} given the filtration $\{{\mathcal Y}_t\}$ \cite[Chapters 16.1] {LiptserShirayev}. This fact can be used to show that $\hat{x}$ is actually generated by a Kalman filter \cite[Chapters 11 and 12] {LiptserShirayev}. This last approach requires quite a sophisticated analysis and is restricted to the case where the driving noise $w$ is a Wiener process.
\end{remark}

A key point for establishing the separation priniciple is to identify admissible control laws for which  \eqref{eq:calYconstant} holds. 
For each such control law $\pi$ we need a solution of the feedback equation \eqref{z02z}, i.e., a pair $(z_0,z)$ of stochastic processes that satisfies
\begin{equation}
 \label{z02z_bis}
 z=z_0+g\pi H z.
\end{equation}
Since $z_0$ is the driving process, it is natural to seek a solution $z$ which causally depends on $z_0$ and is unique. If this is the case then $z$ is a {\em strong solution}; otherwise it is a {\em weak solution}. There are well-known examples of stochastic differential equations that have only weak solutions \cite[page 137]{Klebaner}, \cite{tsirelson,benes}. Moreover, as we have mentioned in Remark \ref{varaiya}, weak solutions circumvent the need to establish the equivalence \eqref{eq:calYconstant} between filtrations. Thus, it has been suggested that
the framework of weak solutions is the appropriate one for control problems \cite[page 149]{RogersWilliams}.
Yet, from an applications point of view, where the control needs to be causally dependent on observed data, this is in our view questionable.
In fact, in the present paper we take an even more stringent view on the causal dependence.
We require that \eqref{z02z_bis} has a unique strong solution which specifies a measurable map $z_0 \to z$ between sample-paths (cf.\ \cite[Remark 5.2, p. 128]{Klebaner}, \cite[p. 122]{RogersWilliams}), thus modeling correspondence between signals -- we further elaborate upon this in Section \ref{wellposedsec}. 

In short, we only allow control laws which are physically realizable in an engineering sense, in that they induce a signal that travels through the feedback loop. This comes at a price since there are stochastic differential equations having strong solutions that do not fall in this category (Remark \ref{failureremark}). Moreover, verifying that a control law is admissible in our sense may be difficult to ascertain in general. On the other hand, an advantage of the approach is that the class of control laws includes discontinuous ones and allows for  statements about linear systems driven by non-Gaussian noise with possible jumps. We now proceed to develop the approach and the key property of {\em deterministic well-posedness}.

\section{Signals and systems}\label{signalssec} 

Signals are thought of as sample paths of a stochastic process with possible discontinuities.
This is quite natural from several points of view. First, it encompasses the response of a typical nonlinear operation that involves  thresholding and switching, and second, it includes sample paths of counting processes and other martingales.
More specifically we consider signals to belong to the {\em Skorohod space} $D$; this is defined as the space of functions which are continuous on the right and have a left limit at all points, i.e., the space of {\em c\`adl\`ag} functions.\footnote{``continu \`a droite, limite  \`a gauche" in French, alternatively RCLL (``right continuous with left limits") in English.} It contains the space $C$ of continuous functions as a proper subspace. The notation $D[0,T]$ or $C[0,T]$ emphasizes the time interval where signals are being considered.

Traditionally, the comparison of two continuous functions in the uniform topology relates to how much their graphs
need to be perturbed so as to be carried onto one another by changing only the ordinates, with the time-abscissa being kept fixed.
However, in order to metrize $D$ in a natural manner one must recognize the effect of uncertainty in measuring time and allow a respective deformation of the time axis as well.
To this end, let ${\mathcal K}$ denote the class of strictly increasing, continuous mappings of $[0,T]$ onto itself and let $I$ denote the identity map. Then, for $x,y\in D[0,T]$,
\[
d(x,y):=\inf_{\kappa\in{\mathcal K}}\max\{\|\kappa - I\|,\|x-y\kappa\|\}
\]
defines a metric
on $D[0,T]$ which induces the so called {\em Skorohod topology}. A further refinement 
so as to ensure bounds on the slopes of the chords of $\kappa$,
renders $D[0,T]$ separable and complete, that is, $D[0,T]$ is a {\em Polish} space; see \cite[Theorem 12.2]{billingsley}.

Systems are thought of as general measurable nonanticipatory maps from $D\to D$ sending sample paths to sample paths so that their  outputs at any given time $t$ is a measurable function of past values of the input and of time. 
More precisely, let
\[
\Pi_\tau\;:\; x\mapsto \Pi_\tau x:=\left\{\begin{array}{l} x(t) \mbox{ for }t<\tau\\ x(\tau) \mbox{ for }t\geq \tau.\end{array}\right.
\]
Then, a measurable map $f:\, D[0,T]\to D[0,T]$ is said to be a {\em system\/} if and only if
\[
\Pi_\tau f\; \Pi_\tau =\Pi_\tau  f \quad \mbox{for all  $\tau\in  [0,T]$.}
\]

An important class of systems is provided by stochastic differential equations with Lipschitz coefficients driven by a Wiener process \cite[Theorem 13.1]{RogersWilliams}. These have path-wise unique strong solutions. Strong solutions induce maps between corresponding path spaces
\cite[page 127]{RogersWilliams}, \cite[pages 126-128]{Klebaner}. Also, under fairly general conditions (see e.g., \cite[Chapter V]{Protter}), stochastic differential equations driven by martingales with sample paths in $D$ have strong solutions who are semi-martingales.

Besides stochastic differential equations in general, and those in \eqref{eq:systemint} in particular, other nonlinear maps may serve as systems. For instance, discontinuous hystereses nonlinearities
as well as non-Lipschitz static maps such as
$u\mapsto y:=\sqrt{|u|}$, are reasonable as systems, from an engineering viewpoint. Indeed, these induce maps from $D\to D$ (or from $C\to D$, as in the case of relay hysteresis), are seen to be systems according to our definition,\footnote{More precisely, to be seen as a system, relay hysteresis needs to be preceded by a low-pass filter since its domain consists of continuous functions.} and can be considered as components of nonlinear feedback laws. 
We note that a nonlinearity such as
$u\mapsto y={\rm sign}(u)$
is not a system in the sense of our definition since the output is not in general in $D$. Such nonlinearities, which often appear in bang-bang control, need to be approximated with a physically realizable hysteretic system. 

\section{Well-posedness and a key lemma}\label{wellposedsec}

It is straightforward to construct examples of deterministically well-posed feedback interconnections with elements as above.
However, the situation is a bit more delicate when considering feedback loops since it is also perfectly possible that, at least mathematically, they give rise to unrealistic behavior. A standard example is that of a feedback loop with causal components that ``implements'' a perfect predictor. Indeed, consider a system $f$ which superimposes  its input with a delayed version of it, i.e.,
\begin{displaymath}
f\;:\; z(t) \mapsto z(t)+z(t-t_{\rm delay}),
\end{displaymath} 
for $t\geq 0$, and assume initial conditions  $z(t)=0$ for $t<0$ . Then the feedback interconnection of Figure \ref{fig2} is unrealistic as it behaves as a perfect predictor. The feedback equation 
\begin{displaymath}
z(t)=z_0(t)+f(z(t))
=z_0(t)+z(t)+z(t-t_{\rm delay})
\end{displaymath}
gives rise to $0=z_0(t)+z(t-t_{\rm delay})$, and hence,
\[
z(t)=-z_0(t+t_{\rm delay}).
\]
Therefore, the output process $z$ is not causally dependent on the input.
The question of well-posedness of feedback systems has been studied from different angles for over forty years. See for instance the monograph by Jan Willems \cite{willems}. 

In our present setting of stochastic control we need a concept of well-posedness which ensures that signals inside a feedback loop are causally dependent on external inputs. This is a natural assumption from a systems point of view.

\begin{defn}
A feedback system is  {\em deterministically well-posed} if the closed-loop maps are themselves systems; i.e., the feedback equation $z=z_0+f(z)$
has a unique solution $z$ for inputs $z_0$ and the operator $(1-f)^{-1}$ is itself a system.
\end{defn}
\begin{figure}[htb]\begin{center}
\psfrag{p1}{{\Large\hspace*{-6pt}$\;_{f}$}}
\psfrag{xx}{$z$}
\psfrag{yy}{$\;y$}
\psfrag{x0}{$z_0$}
\psfrag{plus}{$+$}
\psfrag{g1}{$g$}
\psfrag{h1}{$\hspace*{-5pt}H$}
\psfrag{uu}{$u$}
\includegraphics[totalheight=3cm]{./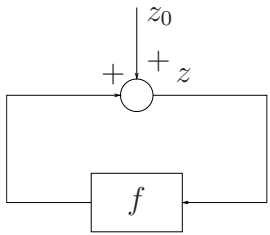}
\caption{Basic feedback system.}\label{fig2}\end{center} \end{figure}

Thus, now thinking about  $z_0$ and  $z$ in the feedback system  in Figure \ref{fig2} as stochastic processes, deterministic well-posedness implies that ${\mathcal Z}_t\subset {\mathcal Z}^0_t$ for $t\in [0,T]$, where ${\mathcal Z}_t$ and ${\mathcal Z}^0_t$ are the sigma-fields generated by $z$ and $z_0$, respectively. This is a consequence of  the fact that $(1-f)^{-1}$ is a system. Likewise, since $(1-f)$ is also a system, ${\mathcal Z}_t^0\subset {\mathcal Z}_t$ so that in fact 
\begin{equation}
\label{well-posednessass}
{\mathcal Z}_t^0= {\mathcal Z}_t, \quad t\in [0,T].
\end{equation}

Next we consider the situation in Figure \ref{fig1} and the relation between ${\mathcal Y}_t$ and the filtration ${\mathcal Y}^0_t$ of the process $y_0=Hz_0$.  The latter represents the ``uncontrolled'' output process where the control law $\pi$ is taken to be identically zero. A key technical lemma for what follows states that the filtrations ${\mathcal Y}_t$ and ${\mathcal Y}^0_t$ are also identical if the feedback system is deterministically well-posed. This is not obvious at first sight, solely on the basis of the linear relationships $y=Hz$ and $y_0=Hz_0$, as the following simple example demonstrates: the two vector processes 
${w\choose 0}$ and ${0\choose w}$
generate the same filtrations while
$(1\; 0){w\choose 0}$ and $(1\; 0){0\choose w}$
do not.

\begin{lemma}\label{keylemma}
If the feedback interconnection in Figure~\ref{fig1} is deterministically well-posed, $g\pi$ is a system,
and $H$ is a linear system having a right inverse $H^{\rm -R}$ that is also a  system, then
$(1-Hg\pi)^{-1}$ is a system and ${\mathcal Y}_t={\mathcal Y}^0_t, \quad t\in [0,T]$.
\end{lemma}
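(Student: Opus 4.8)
The goal is to show two things: (i) $(1-Hg\pi)^{-1}$ is a system, and (ii) ${\mathcal Y}_t={\mathcal Y}^0_t$ for all $t\in[0,T]$. These are closely related, since once we know $y=(1-Hg\pi)^{-1}y_0$ via a system, the inclusion ${\mathcal Y}_t\subset{\mathcal Y}^0_t$ is immediate from the nonanticipatory property, exactly as in the discussion preceding the lemma; the reverse inclusion will come from exhibiting $y_0$ as the image of $y$ under another system. So the real content is the construction of these two maps and the verification that they are nonanticipatory and measurable.

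\medskip\noindent\textbf{Step 1: express $y$ as a function of $y_0$.} Start from the feedback relation $z=z_0+g\pi Hz$ and apply $H$: using $y=Hz$, $y_0=Hz_0$, we get $y=y_0+Hg\pi y$, i.e. $(1-Hg\pi)y=y_0$. By hypothesis the interconnection of Figure~\ref{fig1} is deterministically well-posed, so $z=(1-g\pi H)^{-1}z_0$ is a system; composing with $H$ (a linear system) on the left gives a system $z_0\mapsto y$. But I need a system $y_0\mapsto y$, not $z_0\mapsto y$. This is where the right inverse $H^{\rm -R}$ enters: given $y_0$, set $z_0:=H^{\rm -R}y_0$, which is a system by hypothesis and satisfies $Hz_0=HH^{\rm -R}y_0=y_0$. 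Then $y:=H(1-g\pi H)^{-1}H^{\rm -R}y_0$ is a composition of systems, hence a system, and one checks it solves $(1-Hg\pi)y=y_0$. Uniqueness of this solution (so that the map is well-defined as the inverse) follows because any solution $y$ of $(1-Hg\pi)y=y_0$ comes from $z:=(1-g\pi H)^{-1}z_0$ for any $z_0$ with $Hz_0=y_0$, and applying $H$ recovers $y$; the key point is that $Hg\pi$ depends on its argument only through $g\pi$ acting on it, so the value of $(1-Hg\pi)^{-1}y_0$ does not depend on which preimage $z_0$ of $y_0$ we picked. Thus $(1-Hg\pi)^{-1}=H(1-g\pi H)^{-1}H^{\rm -R}$ is a system, giving (i).

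\medskip\noindent\textbf{Step 2: the two filtration inclusions.} Since $(1-Hg\pi)^{-1}$ is a system, $y(t)$ is a measurable function of $\{y_0(s):s\le t\}$; hence ${\mathcal Y}_t\subset{\mathcal Y}^0_t$. For the reverse inclusion, observe that $1-Hg\pi$ is itself a system (composition of the linear system $H$, the system $g\pi$, and subtraction), and from $(1-Hg\pi)y=y_0$ we read off $y_0(t)$ as a measurable function of $\{y(s):s\le t\}$, so ${\mathcal Y}^0_t\subset{\mathcal Y}_t$. Combining, ${\mathcal Y}_t={\mathcal Y}^0_t$ for all $t$, which is (ii).

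\medskip\noindent\textbf{Anticipated obstacle.} The delicate point is Step~1: showing that $H(1-g\pi H)^{-1}H^{\rm -R}$ really is the inverse of $1-Hg\pi$ and is independent of the choice of right inverse, i.e. that the algebra of these operators behaves as the block-diagram manipulation suggests. The cautionary example in the text — ${w\choose 0}$ and ${0\choose w}$ generating the same filtration but their first coordinates not — shows that naive coordinate projections can lose information, so one must use genuinely that $H$ has a \emph{system} right inverse (not merely that it is surjective) and that well-posedness is assumed at the level of the $z$-loop, not merely the $y$-loop. I would take care to verify that $\Pi_\tau$ commutes appropriately through the composition $H(1-g\pi H)^{-1}H^{\rm -R}$, using that each factor is a system, so that the composite satisfies $\Pi_\tau f\,\Pi_\tau=\Pi_\tau f$; this is routine but is the step where a hidden anticipation could sneak in if $H^{\rm -R}$ were not assumed nonanticipatory. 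Everything else is bookkeeping with compositions of systems.
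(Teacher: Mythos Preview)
Your approach is the same as the paper's: identify the candidate inverse $h=H(1-g\pi H)^{-1}H^{\rm -R}$, verify it is a two-sided inverse of $1-Hg\pi$, and then read off both filtration inclusions. Step~2 is fine and matches the paper exactly.

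The gap is in your uniqueness argument in Step~1. You assert that ``any solution $y$ of $(1-Hg\pi)y=y_0$ comes from $z:=(1-g\pi H)^{-1}z_0$ for any $z_0$ with $Hz_0=y_0$, and applying $H$ recovers $y$,'' but this is precisely the statement that needs proof, and your justification (``$Hg\pi$ depends on its argument only through $g\pi$ acting on it'') does not establish it. What you have shown so far is only that $h$ is a \emph{right} inverse: $(1-Hg\pi)\circ h = I$. To conclude that $h$ is the inverse you must rule out a second solution $\hat y\neq y$ of $(1-Hg\pi)\hat y=y_0$, and nothing you have written does this.

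The paper fills this in as follows. Given such a $\hat y$, write $\hat y=H\hat z$ (possible since $H$ is right-invertible), set $\hat z_0:=(1-g\pi H)\hat z$, and use the distributivity identity $(1-Hg\pi)H=H(1-g\pi H)$ (which needs linearity of $H$) to get $H\hat z_0=y_0$, so $\hat z_0=z_0+v$ with $Hv=0$. Now linearity of $H$ again gives $H(z+v)=Hz$, hence $g\pi H(z+v)=g\pi Hz$, so $z+v$ solves the same feedback equation $\hat z=\hat z_0+g\pi H\hat z$; well-posedness of the $z$-loop forces $\hat z=z+v$, whence $\hat y=H\hat z=Hz=y$. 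This is exactly the obstacle you anticipate in your final paragraph, and the missing ingredient is the explicit use of $Hv=0$ together with linearity of $H$ to transport uniqueness from the $z$-loop to the $y$-loop.
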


\begin{remark}
Note that, for the prototype problem involving \eqref{eq:system}, the conditions  on $H$ in Lemma~\ref{keylemma} are trivial  as 
 $H=\begin{bmatrix}0&I\end{bmatrix}$ and hence  $H^{\rm -R}:=H'$ is a right inverse. The requirement in the lemma
that $g\pi$ is a system allows for a more general situation where $\pi$ is not itself a system (e.g., generating outputs not in D), but where the cascade connection is still admissible.
\end{remark}

\begin{proof}
By well-posedness $(1-g\pi H)^{-1}$ is a system. To show that $(1-Hg\pi)^{-1}$ exists and is a system, first note that
\begin{equation}
\label{eq:distributivity}
(1-Hg\pi)H = H-Hg\pi H= H (1-g\pi H).
\end{equation}
The first step is using left distributivity and the second is using the fact that $H$ is linear. But then
\begin{equation}
\label{rightinverse}
(1-Hg\pi)\underbrace{H(1-g\pi H)^{-1}H^{\rm -R}}_{h}=I,
\end{equation}
where  $HH^{\rm -R}=I$.
Thus, $h$ is a ``right inverse'' of $p:=(1-Hg\pi)$ in that the composition $p\circ h$ of the two maps is the identity.
We claim that $h$ is in fact the inverse of $p$ (which is necessarily unique) in that $y=h(y_0)$ and
\begin{equation}\label{eq:yy0}
(1-Hg\pi)y=y_0
\end{equation}
establish a bijective correspondence between $y$ and $y_0$, i.e., that both $p\circ h$ as well as $h\circ p$ are identity maps. We need to show the latter.
The only potential problem would be if two distinct values $y$ and $\hat y$ satisfy \eqref{eq:yy0} for the same value for $y_0$. We now show that this is not possible.

Since $H$ is right invertible, $y_0$ can be written in the form
$y_0=Hz_0$ for $z_0=H^{- \rm R}y_0$. Let $z=(1-g\pi H)^{-1}z_0$ and $y=Hz$. Then $y=h(y_0)$, so by \eqref{rightinverse} $y$ is  a particular solution of equation \eqref{eq:yy0}. Now let  $\hat y$ be another solution, i.e., suppose that
\begin{equation}\label{eq:yy1}
(1-Hg\pi)\hat y=y_0
\end{equation}
and that $\hat y\neq y$.
We begin by writing $\hat y$
in the form $\hat y=H\hat z$, which can always be done since $H$ is right invertible. Next we set $\hat z_0:=(1-g\pi H)\hat z$. Then, by well-posedness, $\hat{z}$ is the unique solution of 
\begin{equation}
\label{zhatfeedback}
\hat z=\hat z_0+g\pi H(\hat z).
\end{equation}
Moreover, by \eqref{eq:distributivity} and \eqref{eq:yy1},  $H\hat z_0=y_0$, and consequently $\hat z_0=z_0+v$ with $Hv=0$.
We now claim that $\hat z=z+v$ which would then contradict the assumption that $\hat y\neq y$. To show this,  note that, since $z=z_0+g\pi H z$ and $H$ is linear, 
\[
z+v=z_0+v+g\pi H(z+v).
\]
But the solution to \eqref{zhatfeedback} is unique by well-posedness. Hence,  $\hat z=z+v$ which proves our claim.

Therefore, finally, $(1-Hg\pi)$ is invertible and
\begin{displaymath}
(1-Hg\pi)^{-1}=h
=H(1-g\pi H)^{-1}H^{\rm -R}
\end{displaymath}
is itself is a system, being a composition of systems. 
Thus, the configuration in Fig.\ \ref{fig4} is deterministically well-posed.
Using \eqref{eq:distributivity} once again,
\begin{equation}
\label{eq:commutation}
H(1-g\pi H)^{-1}=(1-Hg\pi)^{-1}H.
\end{equation}
It now follows that
\begin{equation}
\label{eq:first}
y = H(1-g\pi H)^{-1}z_0 
   =  (1-Hg\pi)^{-1}Hz_0
  = (1-Hg\pi)^{-1}y_0,
\end{equation}
while also \eqref{eq:yy0} holds.
Equation \eqref{eq:first} shows that ${\mathcal Y}_t\subset {\mathcal Y}^0_t$, whereas \eqref{eq:yy0} shows that
${\mathcal Y}^0_t\subset {\mathcal Y}_t$.
\end{proof}

The essence of the lemma\footnote{It is interesting to note, as was pointed out by a referee, that the proof of the lemma relies critically on the action of the operator $(1-g\pi H)^{-1}$ on a null set, as the probability ${\mathbb P}(z_0=H^{-R}y_0)=0$ for any nontrivial model. This fact may be disturbing from a probabilistic point of view but does not invalidate the lemma.} is to underscore the equivalence between the configuration in Figure \ref{fig1} and that in Figure \ref{fig4}. It is this equivalence  which accounts for the identity ${\mathcal Y}_t={\mathcal Y}^0_t$ between the respective $\sigma$-algebras. An analogous notion of well-posedness was considered by Willems in \cite{willems78} where however, in contrast, the  well-posedness of the feedback configuration in Figure \ref{fig4}, and consequently the validity of
 ${\mathcal Y}_t={\mathcal Y}^0_t$, is assumed at the outset. 
\begin{figure}[htb]\begin{center}
\psfrag{p1}{{\Large\hspace*{-6pt}$\;_{\pi}$}}
\psfrag{xx}{$\;$}
\psfrag{yy}{$\;y$}
\psfrag{x0}{$\hspace*{-5pt}z_0$}
\psfrag{plus}{$\hspace*{-5pt}+$}
\psfrag{g1}{$g$}
\psfrag{h1}{$\hspace*{-9pt}\begin{array}{cc}\\[-.12in]H \end{array}$}
\psfrag{uu}{$u$}
\includegraphics[totalheight=4.5cm]{./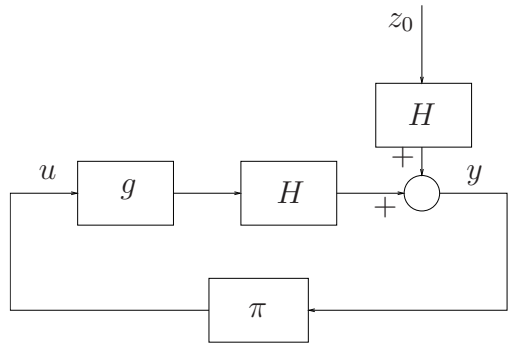}
\caption{An equivalent feedback configuration.}\label{fig4}\end{center} \end{figure}

In the present paper we consider only feedback laws that render the feedback system deterministically well-posed.
Therefore we highlight the conditions in a formal definition.

\begin{defn}
A feedback law $\pi$ is {\em deterministically well-posed\/} for the system \eqref{eq:systemint} if  $g \pi$ is a system and the feedback loop of  Figure \ref{fig1} is deterministically well-posed.
\end{defn}

If the feedback law $\pi$ is deterministically well-posed, then, by Lemma~\ref{keylemma}, the feedback loop in Figure~\ref{fig4} is also deterministically well-posed. Thus, in essence, given the assumption that $z=z_0+g\pi H z$ admits a pathwise unique strong solution, so does $y=y_0+Hg\pi y$.

\begin{remark}\label{rem:completeinf}
For pedagogical reasons, we consider the case of complete state information, corresponding to 
\eqref{completeinfsystem}. 
This corresponds to taking $H=I$ and $z=x$, and the basic feedback loop is as depicted in Figure~\ref{fig2mod}. Then the basic condition for well-posedness \eqref{well-posednessass} states that the filtration $\{ {\mathcal X}_t\}$, where  ${\mathcal X}_t:=\sigma\{ x(s);\; s\in [0,T]\}$, is
\begin{figure}[htb]\begin{center}
\psfrag{p1}{{\Large\hspace*{-3pt}$\;_{f}$}}
\psfrag{xx}{$x$}
\psfrag{yy}{$\;y$}
\psfrag{x0}{$x_0$}
\psfrag{plus}{$+$}
\psfrag{g1}{$g$}
\psfrag{h1}{$\hspace*{-5pt}H$}
\psfrag{uu}{$u$}
\includegraphics[totalheight=3.5cm]{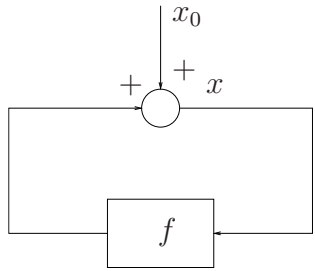}
\caption{Feedback loop for complete state information.}\label{fig2mod}\end{center} \end{figure}
 constant under variations of the control. Consequently, we do not need Lemma~\ref{keylemma} to resolve an issue of circular control dependence. This is completely consistent with the analysis leading up to  \eqref{completeinformation:pi} in Section~\ref{historysec}. 

\end{remark}

\begin{remark}\label{failureremark}
We now present an example of a feedback system which fails to be deterministically well-posed\footnote{This example was kindly suggested by a referee.}. Consider the system
\begin{equation}\nonumber
\begin{cases}
dx=udt + dw\\
y= x
\end{cases}
\end{equation}
where $w$ is a Wiener process. Then, the control law $u=\pi(y)$ with $\pi(y)=\max\{|x|^{2/3},1\}$ is not deterministically well-posed although the stochastic differential equation
\[
dx=\pi(x)dt+dw
\]
has a unique strong solution \cite[Chapter 5, Proposition 5.17]{KaratzasShreve} in the sense that any other solution has same sample paths with probability one (indistinguishable). The failure to be deterministically well-posed can be traced to the fact that this control law allows for multiple consistent responses for $w\equiv 0$, a physically questionable situation. Indeed, the ordinary differential equation $\dot x =\pi(x)$ is not Lipschitz and has infinitely many solutions.
\end{remark}

\section{The separation principle}\label{separationsec}

Our first result is  a very general separation theorem  for the classical stochastic control problem  stated at the beginning of Section \ref{sec:introduction}.

\begin{thm}\label{basicsepthm}
Given the system \eqref{eq:system}, consider the problem of minimizing the functional \eqref{eq:functional} over the class of all feedback laws $\pi$ that are deterministically well-posed for \eqref{eq:system}. Then the unique optimal control law is given by \eqref{eq:K}, where $K$ is defined by \eqref{optimalcontrol}, and $\hat{x}$ is given by the Kalman filter \eqref{eq:Kalmanfilter}. 
\end{thm}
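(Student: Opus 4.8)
The plan is to leverage the completion-of-squares identity \eqref{eq:J(u)} together with the key filtration equivalence \eqref{eq:calYconstant}, which here is guaranteed by Lemma~\ref{keylemma} under the standing assumption of deterministic well-posedness. First I would recall that for any admissible $\pi$ the control $u=\pi(y)$ is by definition adapted to $\{{\mathcal Y}_t\}$, and that, since the feedback loop is deterministically well-posed and $H=[0\;\,I]$ has the right inverse $H^{\rm -R}=H'$ which is a system, Lemma~\ref{keylemma} yields ${\mathcal Y}_t={\mathcal Y}^0_t$ for all $t\in[0,T]$. This is the crucial step that rescues the "common mistake'' discussed on page~\pageref{mistake}: because the observation filtration coincides with the control-free filtration $\{{\mathcal Y}^0_t\}$, the error process satisfies $\tilde x(t)=\tilde x_0(t)$ via \eqref{xtilde}, and the conditional expectation $\hat x_0(t)=E\{x_0(t)\mid{\mathcal Y}^0_t\}$ genuinely does not depend on the control. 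Hence the error covariance $\Sigma(t)=E\{\tilde x(t)\tilde x(t)'\}$ is the same for every admissible $\pi$.

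Next I would justify that $\hat x$ is generated by the Kalman filter \eqref{eq:Kalmanfilter}. Given ${\mathcal Y}_t={\mathcal Y}^0_t$, the relevant estimation problem is the linear-Gaussian one for the uncontrolled system \eqref{eq:uncontroledsystem}, so $\hat x_0=\hat\xi_0$ is the classical Kalman estimate with gain $L$ solving the dual (filtering) Riccati equation; adding back the deterministic-given-${\mathcal Y}_t$ convolution term $\int_0^t\Phi(t,s)B_1(s)u(s)ds$ as in the derivation around \eqref{u2x}--\eqref{dy02dy} shows that $\hat x(t)=\hat x_0(t)+\int_0^t\Phi(t,s)B_1(s)u(s)ds$ satisfies \eqref{eq:Kalmanfilter} driven by $dy$. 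With this in hand, I would return to \eqref{eq:J(u)} and combine it with the orthogonality relation $E\{[u(t)-K(t)\hat x(t)]\tilde x(t)'\}=0$ (which holds because $u(t)-K(t)\hat x(t)$ is ${\mathcal Y}_t$-measurable and $E\{\tilde x(t)\mid{\mathcal Y}_t\}=0$) to obtain the decomposition \eqref{KRKSigma}, giving
\[
J(u)=E\{x(0)'P(0)x(0)\}+\int_0^T{\rm tr}(B_2'PB_2)\,dt+\int_0^T{\rm tr}(K'RK\Sigma)\,dt+E\int_0^T(u-K\hat x)'R(u-K\hat x)\,dt.
\]
All terms but the last are now independent of the choice of admissible $\pi$, and since $R(t)\succ0$ the last term is nonnegative and vanishes precisely when $u(t)=K(t)\hat x(t)$ a.s., which establishes both optimality and uniqueness of \eqref{eq:K}.

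I should also check one consistency point: the control law $u=K\hat x$ must itself be deterministically well-posed for \eqref{eq:system}, so that it lies in the admissible class over which we claim it is optimal. Here $\hat x$ is obtained from the linear Kalman filter \eqref{eq:Kalmanfilter}, which (being a linear system driven by $y$, and with $K$, $L$ bounded) gives a closed-loop feedback equation of the form $z=z_0+g\pi Hz$ that is linear with Volterra structure, hence admits a pathwise unique strong solution causally depending on $z_0$; this follows from the theory of linear Volterra equations and the same reasoning as in the linear Example with class ${\mathcal L}$.

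The main obstacle I anticipate is the rigorous justification that $\hat x$ is realized by the Kalman filter and that $\Sigma$ is control-independent \emph{without circular reasoning}: one must use the filtration equivalence ${\mathcal Y}_t={\mathcal Y}^0_t$ from Lemma~\ref{keylemma} as the entering wedge, rather than assuming it, and then argue that conditioning on the control-free filtration reduces the filtering problem to the standard linear-Gaussian case for \eqref{eq:uncontroledsystem}. A secondary technical point is ensuring the local-martingale term $\int_0^T x'PB_2\,dw$ has zero expectation, which requires the square-integrability argument around \eqref{u2x}; this is routine once $u$ is known to be square integrable (forced by finiteness of $J(u)$). For the martingale-noise extension mentioned in the abstract, the same completion-of-squares and orthogonality structure applies with $\hat x$ replaced by the strict-sense conditional mean \eqref{eq:xhat}, the only change being that $\Sigma$ need no longer be deterministic but is still control-independent by the filtration equivalence; I would flag this as the content of a companion theorem rather than part of the proof of Theorem~\ref{basicsepthm} itself.
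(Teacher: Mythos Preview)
Your approach is essentially the paper's: invoke Lemma~\ref{keylemma} to get ${\mathcal Y}_t={\mathcal Y}^0_t$, conclude that $\Sigma$ is control-independent, plug into the completion-of-squares identity \eqref{eq:J(u)}--\eqref{KRKSigma}, and then verify that the candidate law $u=K\hat x$ is itself admissible. The paper proceeds exactly this way, dispatching the first part in two sentences by reference to Section~\ref{historysec}.

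Where you underestimate the work is the ``consistency point'' at the end. In the paper this is not a side remark but the bulk of the proof. The issue is that the Kalman filter \eqref{eq:Kalmanfilter} defines $\hat x$ through a \emph{stochastic} integral $\int_0^t \Psi(t,s)L(s)\,dy(s)$, which is only defined almost surely, not sample-by-sample; so $\pi_{\rm opt}$ is not \emph{a priori} a system in the paper's sense, and appealing to ``linear Volterra structure'' or to the class~${\mathcal L}$ (which only addresses filtration equivalence, not deterministic well-posedness) does not settle it. The paper resolves this by observing that $s\mapsto M(t,s):=K(t)\Psi(t,s)L(s)$ has bounded variation, so integration by parts converts the stochastic integral into the pathwise-defined expression $M(t,t)y(t)-\int_0^t d_sM(t,s)\,y(s)\,ds$; only then does the feedback equation $z=z_0+g\pi_{\rm opt}Hz$ become a genuine Volterra equation with continuous kernel, to which the resolvent-equation machinery applies and yields that $(1-g\pi_{\rm opt}H)^{-1}$ is a system. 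You should make this step explicit.
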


\begin{proof}
By  Lemma~\ref{keylemma}, \eqref{eq:Sigma} does not depend on the control. Therefore, given the analysis at the beginning of Section~\ref{historysec}, \eqref{eq:K} is the unique optimal control provided it defines a deterministically well-posed control law. It remains to show this.

Inserting \eqref{eq:K} into \eqref{eq:Kalmanfilter} yields 
\begin{displaymath}
\hat{x}(t)=\int_0^t\Psi(t,s)L(s)dy(s),
\end{displaymath}
where the transition matrix $\Psi(t,s)$ of $[A(t)+B_1(t)K(t)-L(t)C(t)]$ has partial derivatives in both arguments.  Together with \eqref{eq:K} this yields
\begin{equation}
\label{pioptdy}
u(t)=(\pi_{\rm opt}y)(t):=\int_0^t M(t,s)dy(s), 
\end{equation}
where $M(t,s):=K(t)\Psi(t,s)L(s)$. Clearly  $s\mapsto M(t,s)$ has bounded variation for each $t\in [0,T]$, and therefore integration by parts yields
\begin{equation}
\label{piopty}
(\pi_{\rm opt}y)(t)=  M(t,t)y(t) -\int_0^td_s M(t,s)y(s)ds,
\end{equation}
which  is defined samplewise.  Now inserting $u=\pi_{\rm opt}Hz$ into \eqref{G2g} and \eqref{z02z} we obtain
\begin{equation}
\label{optequ}
z=z_0+g\pi_{\rm opt}Hz,
\end{equation}
where $g\pi_{\rm opt}Hz$ takes the form
\begin{displaymath}
(g\pi_{\rm opt}Hz)(t) =\int_0^t N(t,s)dz(s) \quad \mbox{with\;\;} N(t,s)=\int_s^tG(t,\tau)M(\tau,s)Hd\tau ,
\end{displaymath}
where $G$ is the kernel of the Volterra operator \eqref{G2g}. A simple calulation yields
\begin{displaymath}
\frac{\partial G}{\partial s}(t,s)=\begin{bmatrix}A(t)\\C(t)\end{bmatrix}\Phi(t,s)B_1(s),
\end{displaymath}
where $\Phi(t,s)$ is the transition matrix of $A$, and therefore 
$Q(t,s):=\frac{\partial N}{\partial s}(t,s)$
is a continuous Volterra kernel, and so is the unique solution $R$ of the resolvent equation 
\begin{equation}
\label{resolventeq}
R(t,s)=\int_s^tR(t,\tau)Q(\tau,s)d\tau + Q(t,s)
\end{equation}
\cite{Smithies,Zabreyko}. From \eqref{optequ} we have
\begin{displaymath}
dz =dz_0 +\int_0^tQ(t,s)dz(s)dt
\end{displaymath}
from which it follows that
\begin{displaymath}
\int_0^tQ(t,s)dz(s) = \int_0^tR(t,s)dz_0(s).
\end{displaymath}
Consequently,  $(1-g\pi_{\rm opt}H)$ has a unique preimage given by
\begin{displaymath}
[(1-g\pi_{\rm opt}H)^{-1}z](t)=z_0(t)+\int_0^t\int_\tau^t R(t,s)dsdz_0(\tau),
\end{displaymath}
which is clearly a system, as claimed. Hence the feedback loop is deterministically well-posed.
\end{proof}

Consequently, for a system driven by a Wiener process with Gaussian initial condition, the linear control law defined by \eqref{eq:K} and \eqref{eq:Kalmanfilter} is optimal in the class of all linear and nonlinear control laws for which the feedback system is deterministically well-posed.

If we forsake the requirement that $\hat{x}$ is given by the Kalman filter \eqref{eq:Kalmanfilter}, we can now allow $x_0$ to be non-Gaussian and $w$ to be an arbitrary martingale, even allowing jumps.

\begin{thm}\label{thm:martingale}
Given the system \eqref{eq:system}, where $w$ is a martingale and  $x(0)$ is an arbitrary zero mean random vector independent of $w$, consider the problem of minimizing the functional \eqref{eq:functional} over the class of all feedback laws $\pi$ that are deterministically well-posed for \eqref{eq:system}. Then, provided it is deterministically well-posed,  the unique optimal control law is given by \eqref{eq:K}, where $K$ is defined by \eqref{optimalcontrol} and $\hat{x}$ is the conditional mean \eqref{eq:xhat}.
\end{thm}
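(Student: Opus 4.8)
The plan is to mirror the proof of Theorem~\ref{basicsepthm}, isolating the two ingredients that genuinely depended on the Gaussian/Wiener structure and checking that neither is actually needed. The completion-of-squares identity \eqref{eq:J(u)} and the decomposition \eqref{KRKSigma} into $(u-K\hat x)'R(u-K\hat x)$ plus $\mathrm{tr}(K'RK\Sigma)$ are purely second-moment computations: they use only that $w$ is a martingale (so that $\int_0^T x'PB_2\,dw$ has zero expectation, once square-integrability of the integrand is checked exactly as before from $u\in L^2$ and \eqref{u2x}), and the projection identity $E\{[u(t)-K(t)\hat x(t)]\tilde x(t)'\}=0$, which holds for the strict-sense conditional mean \eqref{eq:xhat} by the defining property $\tilde x(t)\perp {\cal Y}_t$ and ${\cal Y}_t$-measurability of $u(t)-K(t)\hat x(t)$. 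So the algebra reducing $J(u)$ to $E\{x(0)'P(0)x(0)\} + E\int_0^T(u-K\hat x)'R(u-K\hat x)\,dt + \mathrm{tr}\text{-terms involving }\Sigma\text{ and }B_2$ goes through verbatim, provided $\Sigma$ is shown to be control-independent.

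The second ingredient is precisely that control-independence of $\Sigma(t)=E\{\tilde x(t)\tilde x(t)'\}$. In Theorem~\ref{basicsepthm} this came from Lemma~\ref{keylemma}: deterministic well-posedness forces ${\cal Y}_t={\cal Y}^0_t$, and then \eqref{xtilde}–\eqref{eq:xhat0} give $\tilde x(t)=x_0(t)-E\{x_0(t)\mid{\cal Y}^0_t\}$, a quantity built entirely from the uncontrolled model and hence free of $u$. That argument used nothing about the noise being Wiener or the initial condition being Gaussian — Lemma~\ref{keylemma} is a pathwise statement about systems, and \eqref{u2x} (the cancellation of the control term in $x-\hat x$) is just linearity of the dynamics together with linearity of conditional expectation. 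So the same two lines apply here: $\hat x_0$ depends only on $y_0=Hz_0$, which is control-independent, hence so is $\tilde x=\tilde x_0$ and hence so is $\Sigma$. Consequently $J(u)$ is minimized, among all deterministically well-posed $\pi$, exactly when $u(t)=K(t)\hat x(t)$ a.s.

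It then remains to verify that the candidate law \eqref{eq:K} with $\hat x$ the strict-sense conditional mean is itself deterministically well-posed — this is the only place the martingale case cannot simply quote the Wiener proof, and I expect it to be the main obstacle. In Theorem~\ref{basicsepthm} one had the explicit linear realization \eqref{pioptdy} of $\pi_{\rm opt}$ via the filter transition matrix $\Psi$, giving the samplewise formula \eqref{piopty} and the resolvent computation \eqref{resolventeq}; that realization relied on $\hat x$ being generated by the \emph{Kalman} filter \eqref{eq:Kalmanfilter}, which in the non-Gaussian martingale setting is no longer automatic. The honest way to handle this is the way the theorem is in fact stated — "provided it is deterministically well-posed" — so the proof simply invokes this hypothesis: one assumes that the map $y\mapsto u$ defined by feeding $y$ into whatever (possibly nonlinear) realization computes $\hat x(t)=E\{x(t)\mid{\cal Y}_t\}$ and setting $u=K\hat x$ is a system, and that the resulting feedback loop \eqref{optequ} is deterministically well-posed. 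Under that standing assumption Lemma~\ref{keylemma} applies to $\pi_{\rm opt}$, ${\cal Y}_t={\cal Y}^0_t$, the preceding paragraph's $\Sigma$-argument closes the optimality claim, and uniqueness follows since $R(t)\succ 0$ makes the integrand $(u-K\hat x)'R(u-K\hat x)$ strictly positive unless $u=K\hat x$ a.e. Thus the proof is: (i) re-run the completion of squares, valid for martingale $w$; (ii) invoke Lemma~\ref{keylemma} to get control-independence of $\Sigma$ via $\tilde x=\tilde x_0$; (iii) conclude optimality and uniqueness of $u=K\hat x$; (iv) note that admissibility of this law is exactly the hypothesis of the theorem.
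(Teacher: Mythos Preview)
Your overall architecture is right and matches the paper: completion of squares, then Lemma~\ref{keylemma} to get ${\cal Y}_t={\cal Y}_t^0$ and hence control-independence of $\Sigma$, then optimality and uniqueness, with well-posedness of the candidate law taken as the stated hypothesis. Steps (ii)--(iv) are essentially what the paper does.

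The gap is in step (i). You assert that the completion-of-squares identity \eqref{eq:J(u)} ``goes through verbatim'' for martingale $w$, but it does not: the It\^o formula for general semimartingales (in particular, martingales with jumps) is not the same as for the Wiener case. The term $\mathrm{tr}(B_2'PB_2)\,dt$ in \eqref{eq:J(u)} came from the quadratic variation of a Wiener-driven diffusion; for a general martingale $w$ the corresponding term is $\int_0^T\mathrm{tr}\big(P(t)\,d[x,x'](t)\big)$, and $[x,x']$ is not a priori control-independent --- you have to decompose $x$ as a bounded-variation part plus the stochastic integral $\int B_2\,dw$ and argue that only the latter contributes to the quadratic variation. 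Moreover, if $w$ has jumps, the It\^o formula for the quadratic $x'Px$ carries an additional jump-correction sum, and one must check that it either vanishes or is control-independent (in fact a short computation shows it vanishes identically for a quadratic, since $x(s)=x(s_-)+\Delta_s$ makes the summand zero). Finally, the paper does not simply assert that $E\int_0^T x'PB_2\,dw=0$ from square-integrability; it isolates the control-dependent piece via \eqref{u2x}, swaps the order of integration, and uses ${\cal W}_t$-measurability of $u(t)$ together with the martingale property to kill that piece. None of this is hard, but it is exactly the content that distinguishes the martingale case from Theorem~\ref{basicsepthm}, and your proposal skips it.
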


\begin{proof}
Given Lemma~\ref{keylemma}, we can use the same completion-of-squares argument as in Section~\ref{historysec} except that we now need to use Ito's differential rule for martingales (see, e.g.,  \cite{Klebaner,Protter}), which, in integrated form, becomes 
\begin{equation}\label{generalIto}
\begin{split}
x(T)'P(T)x(T)-x(0)'P(0)x(0)=f_\Delta +\phantom{xxxxxxxxxxxxxxxx}\\+ \int_0^T\{ x(t)'\dot{P}(t)x(t)dt+2x(t_-)'P(t)dx+ {\rm tr}\left(  P(t)d[x,x'] \right)\} ,
\end{split}
\end{equation} 
where $[x,x']$ is the quadratic variation of $x$  and $f_\Delta$ is an extra term which is in general nontrivial when $w$ has a jump component. Now let
\begin{displaymath}
q(t):=\int_0^t \Phi(t,s)\big(A(s)x(s)+B_1(s)u(s)\big) ds,
\end{displaymath}
where  $\Phi$ is the transition function of \eqref{eq:system} which is differentiable in both arguments. Then,  $x=q+v$,
where $dv=B_2dw$ and $q$ is a continuous process with bounded variation. Therefore
\begin{displaymath}
[x,x']=[q,q']+2[q,v']+[v,v']=[v,v'].
\end{displaymath}
In fact, $[q,q']=[q,v']=0$ \cite[Corollary 8.5]{Klebaner}. Since $v$ does not depend on the control $u$, neither does the last term in the integral in \eqref{generalIto}. If $w$ has a jump component, we have  a nontrivial extra term in \eqref{generalIto}, namely
\begin{displaymath}
f_\Delta=\sum_{s\leq T} \big[ x(s)'P(s)x(s)-x(s_-)'P(s)x(s_-) -2x(s_-)'P(s)\Delta_s  - \Delta_s' P(s)\Delta_s\big]
\end{displaymath}
where the sum is over all jump times $s$ on the interval $[0,T]$ and $\Delta_s:=x(s)-x(s_-)$ is the jump, and we need to ensure that this term does not depend on the control  either. However, since $x(s)=x(s_-)+\Delta_s$, we have $f_\Delta=0$. 

Then the rest of the proof that  \eqref{eq:K} with $\hat{x}$ given by \eqref{eq:xhat} is the unique minimizer of \eqref{eq:functional} over all deterministically well-posed control laws follows from an argument as in Section~\ref{historysec}. More precisely, using \eqref{optimalcontrol} and completing the squares we obtain
\begin{equation}
\label{eq:functional1}
\begin{split}
\int_0^T x(t)'Q(t)x(t)dt+\int_0^Tu(t)'R(t)u(t)dt +x(T)'Sx(T)\\
= x(0)'P(0)x(0)+\int_0^T(u-Kx)'R(u-Kx)dt\\
+\int_0^T {\rm tr}\left(  P(t)d[v,v']\right) +\int_0^Tx(t_-)'P(t)B_2(t)dw.
\end{split}
\end{equation}
Next we show that 
$E\left\{\int_0^Tx(t_-)'P(t)B_2(t)dw\right\}$ does not depend on the control in this more general case as well.
Since only the second term in \eqref{u2x} depends on the control, the problem reduces to showing that
\[
E\left\{\int_0^T\left[ \int_0^t\Phi(t,s)B_1(s)u(s)ds\right]^\prime P(t)B_2(t)dw(t)\right\}=0.
\]
After a change in the order of integration this is equivalent to
\begin{equation}
\label{crossterm}
E\left\{\int_0^T u(t)'\int_t^TF(t,s)dw(s)dt\right\}=0,
\end{equation}
where $F(t,s)=B_1(t)'\Phi(s,t)'P(s)B_2(s)$. However, since $u(t)$ is ${\mathcal W}_t$-measurable, where ${\mathcal W}_t$ is the sigma-field generated by $\{ w(s); 0\leq s\leq t\}$, \eqref{crossterm} can be written
\begin{displaymath}
E\left\{\int_0^T u(t)'E\left\{\int_t^TF(t,s)dw(s)\mid{\mathcal W}_t\right\}dt\right\},
\end{displaymath}
which is zero since $w$ is a martingale. In view of \eqref{KRKSigma} where  \eqref{eq:Sigma} does not depend on the control (Lemma \ref{keylemma}) the statement of the theorem follows. \end{proof}

We note that in general the optimal control law does not belong to ${\cal L}$ and that $\hat{x}$ is not given by the Kalman filter \eqref{eq:Kalmanfilter} but by the conditional mean \eqref{eq:xhat}, which then has to be chosen with some care since it is only defined almost surely as projection for each individual time $t$. To this end it is standard to select the optional projection of $x(t)$ on ${\mathcal Y}_t$ which is a stochastic process with a c\`adl\`ag version \cite[page 17]{BainCrisan}.
Often $\hat x$ is given by a nonlinear filter as in the following example. However, even in those cases, it is difficult to ascertain well-posedness. At present, we are unable to establish that the control law in the example is deterministically well-posed and hence optimal in our admissible class of controls. We conjecture that Theorem \ref{thm:martingale} can be strengthened by removing the {\em a priori} assumption of well-posedness for the case where the optimal filter can be expressed as a stochastic differential equation with locally Lipschitz coefficients by suitable use of stopping times. Such a strengthening would suffice to prove optimality for the following example where we are currently unable to prove well-posedness.

\begin{figure}[htb]\begin{center}
\psfrag{v}{$v$}
\psfrag{u}{$u$}
\psfrag{ydot}{$\dot y$}
\psfrag{x}{$x$}
\psfrag{int}{$\Large \int$}
\psfrag{wdot}{$\sigma\dot w$}
\psfrag{h1}{$\hspace*{-9pt}\begin{array}{cc}\\[-.11in]H \end{array}$}
\psfrag{uu}{$u$}
\includegraphics[totalheight=1.8cm]{./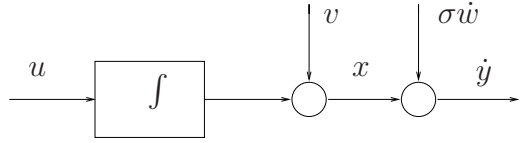}
\caption{Model for step change in white noise.}\label{fig:industrial}\end{center} \end{figure}

\begin{ex} 
Consider the system in Figure \ref{fig:industrial}. Here, $x$ represents a parameter which undergoes a sudden random step change due to a random external forcing $v$. The step can be in either direction. Thus, as a stochastic process $v(t)$ is defined
\begin{equation}
\label{process_v}
v(t)=\begin{cases}
  \theta    &  t\geq\tau \\
  0    &  t<\tau
\end{cases}
\end{equation}
where $\theta=\pm 1$ with equal probability and $\tau$ is a random variable uniformly distributed on $[0,\,T]$. Clearly $v$ is a martingale. Our goal is to maintain a value for the state $x$ close to zero on the interval $[0,T]$ via integral control action through $u$, indirectly, by demanding that
\begin{displaymath}
E\left\{\int_0^T (x^2+R u^2) dt\right\}
\end{displaymath}
be minimal with $R>0$. Here, $u$ denotes the control. The process $x$ is observed in additive white noise $\dot w$.
The system is now written in the standard form \eqref{eq:system} as follows:
\begin{equation}
\label{eq:system_example}
\begin{cases}
dx=u(t)dt+dv,\; x(0)=0,\\
dy=x(t)dt +\sigma dw
\end{cases}
\end{equation}
where $w$ is a Wiener process. We first solve the Riccati equation
$\dot k=-k^2+R^{-1}$ with boundary condition $k(T)=0$ to obtain $k(t)=-R^{-1/2}\tanh\left(R^{-1/2}(T-t)\right)$.
The control law in Theorem \ref{thm:martingale} is
\begin{subequations}\label{control_law}
\begin{equation}
u(t)=k(t)\hat x(t),
\end{equation}
where the conditional expectation
is determined separately using a (nonlinear) Wonham-Shiryaev filter
\begin{eqnarray}
d\hat x &=& k(t) \hat x(t) dt + \frac{1}{\sigma^2}(1-\rho(t)^2- 2(T-t)\phi(t))(dy-\hat x(t) dt)\\
d\rho &=& \frac{1}{\sigma^2}(1-\rho(t)^2- 2(T-t)\phi(t))(dy-\hat x(t) dt)\\
d\phi&=& -\frac{1}{\sigma^2}\phi(t)\rho(t) (dy-\hat x(t) dt)
\end{eqnarray}
\end{subequations}
with $\rho(0)=0$ and $\phi(0)=1$.  Following \cite[page 222]{vanHandel} we explain the steps for deriving the filter equations in Appendix \ref{Appendix_Example}.

In order to conclude that the control law \eqref{control_law}
is {\em actually} optimal we need to establish that the feedback loop is deterministically well-posed. This requires that
\eqref{z02z} has a unique solution for each
$z_0=\left(\begin{matrix} v& w\end{matrix}\right)^\prime$. Noting that the inovation $dy-\hat xdt$ can be expressed as
\[dy-\hat x(t)dt=(v(t)-\rho(t))dt+dw,\]
this requires that the stochastic differential equations
\eqref{eq:system_example}-\eqref{control_law}
can be uniquely solved path-wise as a map from $z_0=\left(\begin{matrix} v& w\end{matrix}\right)^\prime$ to $z=\left(\begin{matrix} x& y\end{matrix}\right)^\prime$. There are conditions in the literature for when path-wise uniqueness holds (see \cite[page 126, Theorem 10.4]{RogersWilliams},  \cite[page 128]{Klebaner}, and the references therein). However, we are not able at present to verify that these hold in our case.
\end{ex}

In view of  Remark~\ref{rem:completeinf} we immediately have the following corollary to Theorem~\ref{thm:martingale} for the case of complete state information. A similar statement was given in \cite{lindquist} in a different context.

\begin{cor}\label{cor:completeinfo}
Given the system \eqref{completeinfsystem}, where $w$ is a martingale and $x(0)$ is an arbitrary random vector independent of $w$, consider the problem of minimizing the functional \eqref{eq:functional} over the class of all feedback laws $\pi$ that are deterministically well-posed for \eqref{completeinfsystem}. Then the unique optimal control law is given by \eqref{completeinformation:pi}, where $K$ is defined by \eqref{optimalcontrol}. 
\end{cor}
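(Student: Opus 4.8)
The plan is to obtain this as a specialization of Theorem~\ref{thm:martingale} with $H=I$ and $z=x$, the only genuinely new ingredient being that, in the complete-information case, one can verify \emph{unconditionally} that the candidate law $u=Kx$ is deterministically well-posed, thereby upgrading the conditional statement of Theorem~\ref{thm:martingale} to an unconditional one. First I would record the simplification coming from complete observations: since $x(t)$ is $\mathcal X_t$-measurable, $\hat x(t)=E\{x(t)\mid\mathcal X_t\}=x(t)$, so $\tilde x\equiv 0$ and the error covariance $\Sigma\equiv 0$; in particular the circular-dependence obstruction that forced the use of Lemma~\ref{keylemma} disappears, exactly as anticipated in Remark~\ref{rem:completeinf}, and \eqref{well-posednessass} already gives $\mathcal X_t=\mathcal X_t^0$ for any admissible $\pi$.

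Next I would run the completion-of-squares computation from the proof of Theorem~\ref{thm:martingale} verbatim. With $P$ the solution of the Riccati equation in \eqref{optimalcontrol} and $K=-R^{-1}B_1'P$, It\^o's rule for martingales produces the identity \eqref{eq:functional1}; writing $x=q+v$ with $q$ of bounded variation and $dv=B_2\,dw$ one has $[x,x']=[v,v']$, which does not depend on $u$, and the jump correction $f_\Delta$ vanishes because $x(s)=x(s_-)+\Delta_s$. Taking expectations, the stochastic-integral term $E\{\int_0^T x(t_-)'P(t)B_2(t)\,dw\}$ is disposed of exactly as in \eqref{crossterm}: the $u$-free part $x_0$ of \eqref{u2x} contributes a mean-zero (local) martingale, and the cross term, after interchanging the order of integration, becomes $E\{\int_0^T u(t)'E\{\int_t^T F(t,s)\,dw(s)\mid\mathcal W_t\}\,dt\}=0$ since $w$ is a martingale and $u(t)$ is measurable with respect to the control-invariant filtration $\mathcal X_t=\mathcal X_t^0\subseteq\mathcal W_t\vee\sigma(x(0))$. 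This leaves
\[
J(u)=E\{x(0)'P(0)x(0)\}+E\int_0^T(u-Kx)'R(u-Kx)\,dt+\int_0^T{\rm tr}\big(P(t)\,d[v,v']\big),
\]
in which only the middle term depends on the control; since $R(t)$ is positive definite, $J$ is minimized, and uniquely so up to a $dt\times d\mathds{P}$-null set, by the choice $u=Kx$.

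The step I expect to be the crux is showing that $u(t)=K(t)x(t)$ is itself an \emph{admissible} control law, i.e.\ deterministically well-posed for \eqref{completeinfsystem}, so that the infimum is actually attained inside the admissible class; without this one only recovers the proviso in Theorem~\ref{thm:martingale}. Here the feedback map $\pi:y\mapsto u$ is pointwise multiplication by the continuous matrix function $K(t)$, which is trivially a system, hence so is $g\pi$; and with $z=x$, $H=I$ the feedback equation \eqref{eq:systemint}--\eqref{G2g} reduces to the linear Volterra equation $x(t)=x_0(t)+\int_0^t\Phi(t,s)B_1(s)K(s)x(s)\,ds$ with continuous kernel, whose resolvent exhibits $x$ as a causal, measurable (indeed continuous, hence $D\to D$) function of $x_0$, precisely as in the closing paragraph of the proof of Theorem~\ref{basicsepthm}. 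Thus $(1-g\pi)^{-1}$ is a system and the loop is deterministically well-posed, which combined with the cost decomposition above gives the claim. The remaining points are routine: if $u\notin L^2(dt\times d\mathds{P})$ then $J(u)=\infty$ and there is nothing to prove, so one may assume $u$ square integrable, which through \eqref{u2x} makes $x$ square integrable and legitimizes both the Fubini interchange and the martingale property of the stochastic integrals invoked above.
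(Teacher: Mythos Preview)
Your proposal is correct and follows essentially the same route as the paper: the paper treats the corollary as an immediate specialization of Theorem~\ref{thm:martingale} via Remark~\ref{rem:completeinf}, and its proof consists only of the well-posedness verification, reducing the feedback equation to the linear Volterra equation $x(t)=x_0(t)+\int_0^t\Phi(t,s)B_1(s)K(s)x(s)\,ds$ and solving it via the resolvent~\eqref{resolventeq}---exactly as you do. You are simply more explicit in recapitulating the completion-of-squares steps (and arguably a bit more careful in noting that $u(t)$ is measurable with respect to $\mathcal W_t\vee\sigma(x(0))$ rather than $\mathcal W_t$ alone, which is what the conditioning argument actually needs when $x(0)$ is random).
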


\begin{proof}
It just remains to prove that the control law  \eqref{completeinformation:pi} is deterministically well-posed. To this end, we first note that (with $z=x$) the feedback equation \eqref{z02z} becomes 
\begin{displaymath}
x(t) =x_0(t) + \int_0^t Q(t,s)x(s)ds,
\end{displaymath}
where $Q(t,s)=\Phi(t,s)B_1(s)K(s)$ with $\Phi$ (as before) being the  transition matrix function of $A$. Then a straight-forward calculation shows that 
\begin{displaymath}
x(t) =x_0(t) + \int_0^t R(t,s)x_0(s)ds,
\end{displaymath}
where $R$ is the unique solution of the resolvent equation \eqref{resolventeq}. This establishes well-posedness. 
\end{proof}

\begin{ex}
Let the driving noise $w$ in  \eqref{completeinfsystem} be given by either a Poisson martingale \cite[page 87]{Klebaner}, or a {\em geometric Brownian motion} \cite[page 124]{Klebaner}
\begin{displaymath}
dw=\mu w(t)dt + \sigma w(t)dv,
\end{displaymath}
where $v$ is a Wiener process, or a combination. 
Then the control law $u(t)=K(t)x(t)$ is optimal for the problem to minimize \eqref{eq:functional}. 
\end{ex}

\section{The separation principle for delay-differential systems}\label{delaysec}

The formulation \eqref{eq:systemint} covers more general stochastic systems than the ones considered above. An example is a delay-differential system of the type
\begin{displaymath}
\begin{cases}
dx=A_1(t)x(t)dt +A_2(t)x(t-h)dt\ +\int_{t-h}^t A_0(t,s)x(s)dsdt + B_1(t)u(t)dt + B_2(t)dw
\\
dy=C_1(t)x(t)dt+C_2(t)x(t-h)dt +D(t)dw
\end{cases}
\end{displaymath}
Apparently, stochastic control for various versions of such systems were first studied in \cite{L68,L69,lindquist1,lindquist}, and \cite{Brooks}, although \cite{Brooks} relies on the strong assumption that the observation $y$ is ``functionally independent" of the control $u$, thus avoiding the key question studied in the present paper. 

Here, as in \cite{lindquist1}, we shall  consider the wider class of stochastic systems
\begin{equation}
\label{delaysystem}
\begin{cases}
dx=\left(\int_{t-h}^t d_sA(t,s)x(s)\right) dt + B_1(t)u(t)dt+B_2(t)dw\\
dy=\left(\int_{t-h}^t d_sC(t,s)x(s)\right) dt +D(t)dw
\end{cases}
\end{equation} 
where $A$ and $C$ are of bounded variation in the first argument and continuous on the right in the second, $x(t)=\xi(t)$ is deterministic (for simplicity) for $-h\leq t\leq 0$, and $y(0)=0$. 
More precisely, $A(t,s)=0$ for $s\geq t$, $A(t,s)=A(t,t-h)$ for $t\leq t-h$, and the total variation of $s\mapsto A(t,s)$ is bounded by an integrable function in the variable $t$§, and the same holds for $C$.
Moreover, to avoid technicalities we assume that $w$ is now a (square-integrable) Gaussian (vector) martingale. Now,  the first of equations \eqref{delaysystem} can be written in the form
\begin{equation}
\label{integratedxdelay}
\begin{split}
x(t)=\Phi(t,0)\xi(0)+\int_{-h}^0 d_\tau\left\{\int_0^t\Phi(t,s)A(s,\tau)ds\right\}\xi(\tau)\\
+\int_0^t\Phi(t,s)B_1(s)u(s)ds + \int_0^t\Phi(t,s)B_2(s)dw
\end{split}
\end{equation}
\cite[p.\ 85]{lindquist1}, where $\Phi$ is the Green's function corresponding to the determinisitic system \cite{Banks} (also  see, e.g.,  \cite[p.\ 101]{lindquist1}). In the same way, we can express the second equation in integrated form. Consequently, \eqref{delaysystem} can be written in the form  \eqref{eq:systemint}, where $K$ and $H$ are computed as in \cite[pp.\ 101--103]{lindquist1}.
The problem is to find a feedback law \eqref{eq:pi} that minimizes 
\begin{equation}
\label{Jalpha}
J(u):=E\{ V_0(x,u)\}
\end{equation}
subject to the constraint \eqref{delaysystem}, where
\begin{equation}
\label{V_s}
V_s(x,u):=\left\{ \int_s^T x(t)'Q(t)x(t)d\alpha(t)+\int_s^Tu(t)'R(t)u(t)dt\right\}
\end{equation}
and $d\alpha$ is a positive Stieltjes measure.  

Lemma~\ref{keylemma} enables us to strengthen the results in \cite{lindquist1}. To this end, to avoid technicalities, we shall appeal to a representation result from  \cite{lindquist} rather than using a completion-of-squares argument, although the latter strategy would lead to a stronger result where $w$ could be an arbitrary martingale. A completion-of-squares argument for a considerably simpler problem was given in \cite{Blankenship}, but, as pointed out in \cite{L80comment}, this paper suffers from a  similar mistake as the one pointed out earlier on page~\pageref{mistake} in the present paper.
In this context, we also mention the recent paper \cite{BRZ}, which considers optimal control of a stochastic system with delay in the control. This paper assumes at the outset that the separation principle for delay systems  is valid with a reference to \cite{KwakernaakSivan}. Instead of basing the argument on \cite{KwakernaakSivan}, which is not quite appropriate here, their claim could  be justified by noting that the delay in the control also implies a delay in information as in Example~\ref{ex3} above.

Now, it can be shown that the corresponding deterministic control problem obtained by setting $w=0$ has an optimal linear feedback control law
\begin{equation}
\label{delaycontrol}
u(t)=\int_{t-h}^t d_\tau K(t,\tau)x(\tau),
\end{equation}
where we refer the reader to \cite{lindquist1} for the computation of $K$. The following theorem is a considerable strengthening of the corresponding result in \cite{lindquist1}.

\begin{thm}\label{delaythm}
Given the system \eqref{delaysystem}, where $w$ is a Gaussian martingale,  consider the problem of minimizing the functional \eqref{Jalpha} over the class of all feedback laws $\pi$ that are deterministically well-posed for \eqref{eq:system}. Then the unique optimal control law is given by 
\begin{equation}
\label{delaystochasticcontrol}
u(t)=\int_{t-h}^t d_sK(t,s)\hat{x}(s|t),
\end{equation}
where $K$ is the deterministic control gain \eqref{delaycontrol} and 
\begin{equation}
\label{xhat(s|t)}
\hat{x}(s| t) := E\{ x(s)\mid {\cal Y}_t\}
\end{equation} 
is given by a linear (distributed) filter
\begin{subequations}
\begin{equation}
\label{delayfiltera}
d\hat{x}(t|t)=\int_{t-h}^t d_sA(t,s)\hat{x}(s|t)dt +B_1udt+ X(t,t)dv
\end{equation}
\begin{equation}
\label{delayfilterb}
d_t\hat{x}(s|t)=X(s,t)dv, \; s\leq t \phantom{xxxxxxxxxxxxxxxxxxxx}
\end{equation}
\end{subequations}
where $v$ is the innovation process
\begin{equation}
\label{innovation}
dv=dy - \int_{t-h}^t d_sC(t,s)\hat{x}(s|t)dt, \quad v(0)=0, 
\end{equation}
and the gain $X$ is as defined in \cite[p.120]{lindquist1}. 
\end{thm}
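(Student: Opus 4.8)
The plan is to follow the template of the proofs of Theorems~\ref{basicsepthm} and~\ref{thm:martingale}, with the finite-dimensional Kalman filter and the completion-of-squares step replaced by their distributed-delay counterparts from \cite{lindquist1,lindquist}. First I would invoke Lemma~\ref{keylemma} to freeze the observation filtration. The delay system \eqref{delaysystem} has been recast in the integrated form \eqref{eq:systemint} with kernel $G$ and matrix $H$ computed as in \cite[pp.\ 101--103]{lindquist1}; there $H$ has the same block form $[\,0\;\;I\,]$ as in the prototype problem, so $H^{\rm -R}:=H'$ is a right inverse which is a system, and for any deterministically well-posed $\pi$ the map $g\pi$ is a system by hypothesis. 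Hence the hypotheses of Lemma~\ref{keylemma} are met and ${\mathcal Y}_t={\mathcal Y}^0_t$ for $t\in[0,T]$, where ${\mathcal Y}^0_t=\sigma\{y_0(\tau),\tau\in[0,t]\}$ is generated by the uncontrolled output $y_0$. Writing $x=q+v$ as in the proof of Theorem~\ref{thm:martingale}, with $v$ the control-free solution of $dv=\big(\int_{t-h}^t d_sA(t,s)v(s)\big)dt+B_2dw$ and $q$ the ${\mathcal Y}_t$-measurable continuous bounded-variation part, the filtering error $\tilde x(s|t):=x(s)-\hat x(s|t)$ and its covariance depend only on $v$, hence not on the control.

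Next I would reduce the stochastic problem to the deterministic one. Since ${\mathcal Y}_t={\mathcal Y}^0_t$ does not vary with $\pi$, every deterministically well-posed control is adapted to the fixed family $\{{\mathcal Y}^0_t\}$, so the problem lies inside the stochastic open loop class of \cite{lindquist}. Substituting $x=\hat x+\tilde x$ into \eqref{V_s} and using that $\tilde x(\cdot|t)$ is orthogonal to ${\mathcal Y}_t$, the cross terms vanish in expectation and $J(u)=E\{V_0(\hat x,u)\}+c$ with $c=E\{\int_0^T\tilde x(t|t)'Q(t)\tilde x(t|t)\,d\alpha(t)\}$ independent of $u$ by the previous paragraph. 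The conditional mean obeys the distributed filter \eqref{delayfiltera}--\eqref{delayfilterb}, which for controls adapted to $\{{\mathcal Y}^0_t\}$ is driven by the innovation $v$ of \eqref{innovation} --- a process that does not depend on $u$ because the control terms cancel in \eqref{innovation}, exactly as in the Kushner construction recalled in Section~\ref{historysec} --- together with the explicit term $B_1u\,dt$. Thus $\hat x$ evolves according to a controlled system of the same delay-differential structure as the original deterministic problem, and the representation result of \cite{lindquist} identifies the minimiser of $E\{V_0(\hat x,u)\}$ over this class as the deterministic feedback gain \eqref{delaycontrol} applied to $\hat x$, i.e.\ \eqref{delaystochasticcontrol}; the minimiser is unique because $R(t)$ is positive definite. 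Since our admissible class is contained in this stochastic open loop class, \eqref{delaystochasticcontrol} is the unique optimum within it provided it is itself admissible.

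The remaining --- and, I expect, hardest --- step is to verify that \eqref{delaystochasticcontrol}, coupled with the filter \eqref{delayfiltera}--\eqref{delayfilterb}, defines a deterministically well-posed feedback law, i.e.\ that \eqref{z02z} then has a pathwise unique strong solution $z_0\mapsto z$ that is a system. I would proceed as in the proof of Theorem~\ref{basicsepthm}: eliminate the filter by noting that, since $K$ and $X$ have bounded variation in the relevant arguments, the composite map $y\mapsto u$ is a bounded-variation Volterra operator plus a point-evaluation term, which after integration by parts is defined samplewise; inserting $u=\pi Hz$ into the integrated delay dynamics \eqref{integratedxdelay} yields $dz=dz_0+\big(\int_0^t N(t,s)\,dz(s)\big)dt$ with a continuous Volterra kernel $N$ assembled from the Green's function $\Phi$, the gains $K$ and $X$, and $H$; and solving the resolvent equation of the form \eqref{resolventeq} for $N$ (cf.\ \cite{Smithies,Zabreyko}) exhibits $(1-g\pi H)^{-1}$ as an explicit Volterra operator, hence a system. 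The genuinely new difficulty, absent in Theorems~\ref{basicsepthm}--\ref{thm:martingale}, is that the filter variable $\hat x(s|t)$ is distributed over $s\in[t-h,t]$, so the elimination is carried out on an augmented function-valued state and one must check that the associated kernel stays continuous and Volterra once the distributed filter equations are adjoined; I expect that, using the regularity assumed in \cite{lindquist1} for $\Phi$, $A$, $C$, $K$ and $X$, this collapses to the same scalar resolvent argument, but it is the point that needs the most care.
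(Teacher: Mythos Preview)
Your plan matches the paper's: freeze the filtration via Lemma~\ref{keylemma}, split off the control-independent error term to reduce to minimizing $E\{V_0(\hat x,u)\}$, invoke the representation result from \cite{lindquist}, and verify well-posedness by a Volterra resolvent. The paper makes two intermediate steps explicit that you leave informal. First, it isolates as a lemma (Lemma~\ref{innovationlem}) that the innovation $v$ is a Gaussian martingale with ${\mathcal V}_t={\mathcal Y}_t$; you assert the equivalent fact (``control terms cancel'') without proof. Second --- and this is where your sketch is loose --- the claim that $\hat x$ ``evolves according to a controlled system of the same delay-differential structure'' is not accurate: the right-hand side of \eqref{delayfiltera} involves the \emph{smoothed} estimates $\hat x(s|t)$, not $\hat x(s|s)$, so the filter is not a delay-DE of the form \eqref{delaysystem} and certainty equivalence cannot be read off by analogy. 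The paper instead applies the orthogonal decomposition of Lemma~\ref{decomplem} (the ``representation result'' you cite) to $u$ and $\hat x$ with respect to the innovation martingale, solves a separate deterministic delay problem for each component $\bar u,\bar x$ and $u_k(\cdot,s),x_k(\cdot,s)$, and then identifies the recombination $\bar x(\tau)+\sum_k\int_0^t x_k(\tau,s)\,dv_k(s)$ with $\hat x(\tau|t)$; this identification is precisely where the smoothed rather than filtered estimate enters, and is the step your outline skips. Your well-posedness argument is essentially the paper's.
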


For the proof of Theorem~\ref{delaythm} we shall need two lemmas. The first  is a slight reformulation of Lemma 4.1 in \cite{lindquist} and only requires that $v$ be a martingale. 

\begin{lemma}[{\rm\cite{lindquist}}]\label{decomplem}
Let $v$ be a square-integrable martingale with natural filtration
\begin{equation}
\label{Vfiltration}
{\mathcal V}_t=\sigma\{ v(s), s\inÊ[0,t]\},\quad 0\leq t\leq T
\end{equation}
and satisfying $[v_j,v_k]=\beta_j\delta_{jk}$, 
where $\beta_k$, $k=1,2,\dots,p$, are nondecreasing functions, and $\delta_{jk}$ is the Kronecker delta equal to one for $j=k$ and zero otherwise. 
With $u$ a square-integrable control process adapted to $\{{\mathcal V}_t\}$,  let 
\begin{equation}
\label{uorthdecop}
u(t)=\bar{u}(t)+\sum_{k=1}^p\int_0^tu_k(t,s)dv_k(s)+\tilde{u}(t)
\end{equation}
be the unique orthogonal decomposition for which $\bar{u}$ is deterministic and, for each $t\in [0,T]$, $\tilde{u}$ is orthogonal to the linear span of the components of $\{ v(s), s\inÊ[0,t]\}$. Moreover, let $x_0$ be a square-integrable process adapted to $\{{\mathcal V}_t\}$ and having a corresponding orthogonal decomposition
\begin{equation}
\label{x0orthdecop}
x_0(t)=\bar{x}_0(t)+\sum_{k=1}^p\int_0^tx_k^0(t,s)dv_k(s)+\tilde{x}_0(t).
\end{equation} 
Then $x=x_0 +g(u)$, defined by \eqref{eq:systemint} exchanging $z$ for $x$, has the  orthogonal decomposition
\begin{equation}
\label{xorthdecop}
x(t)=\bar{x}(t)+\sum_{k=1}^p\int_0^tx_k(t,s)dv_k(s)+\tilde{x}(t),
\end{equation}
where
\begin{subequations}
\begin{eqnarray}
\bar{x}(t)  &= & \bar{x}_0(t) +\int_0^tG(t,\tau) \bar{u}(\tau)d\tau \label{eqn(a)}\\
x_k(t,s) &= &x_k^0(t,s) +\int_s^tG(t,\tau)u_k(\tau,s)d\tau\label{eqn(b)} \\
\tilde{x}(t)&=& \tilde{x}_0(t) +\int_0^tG(t,\tau)\tilde{u}(\tau)d\tau\label{eqn(c)}
\end{eqnarray}
\end{subequations}
and
\begin{equation}
\label{Vdecomposition}
E\{V_0(x,u)\}= E\{V_0(\bar{x},\bar{u})+\sum_{k=1}^p\int_0^TV_s(x_k(\cdot,s),u_k(\cdot,s))d\beta_k + E\{V_0(\tilde{x},\tilde{u}\}).
\end{equation}
\end{lemma}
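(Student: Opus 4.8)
The plan is to establish the orthogonal decomposition \eqref{xorthdecop} componentwise by exploiting linearity of the Volterra operator $g$ in \eqref{G2g} and then to verify that the cost $E\{V_0(x,u)\}$ splits as in \eqref{Vdecomposition} by orthogonality. First I would substitute the decompositions \eqref{uorthdecop} of $u$ and \eqref{x0orthdecop} of $x_0$ into the defining relation $x=x_0+g(u)$, and use the fact that $g$ acts linearly on $u$ together with Fubini to interchange $\int_0^tG(t,\tau)(\cdot)d\tau$ with the stochastic integrals $\int_0^\tau u_k(\tau,s)dv_k(s)$. This interchange is legitimate because $u$ is square-integrable and adapted, so the iterated integral $\int_0^t G(t,\tau)\int_0^\tau u_k(\tau,s)dv_k(s)d\tau$ may be rewritten as $\int_0^t\big(\int_s^t G(t,\tau)u_k(\tau,s)d\tau\big)dv_k(s)$ by a stochastic Fubini theorem (the kernel $G$ is a bounded Volterra kernel, hence the integrability hypotheses hold). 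Collecting the deterministic part, the stochastic-integral parts (one for each $k$), and the residual orthogonal part yields exactly the formulas \eqref{eqn(a)}, \eqref{eqn(b)}, \eqref{eqn(c)}.

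The second task is to check that \eqref{xorthdecop} with the $x_k$ just obtained \emph{is} the unique orthogonal decomposition in the sense of the lemma, i.e.\ that $\tilde x(t)$ is orthogonal to the linear span of $\{v(s):s\in[0,t]\}$ for each $t$. For the stochastic-integral terms $\int_0^t x_k(t,s)dv_k(s)$ this is automatic from the isometry $[v_j,v_k]=\beta_j\delta_{jk}$, since such integrals lie in the closed linear span of the increments of $v$ and are mutually orthogonal across $k$; for $\tilde x$ it follows because $g$ preserves this orthogonality ($\int_0^t G(t,\tau)\tilde u(\tau)d\tau$ is again orthogonal to $\{v(s):s\le t\}$, the causal kernel $G(t,\tau)$ being supported on $\tau\le t$ and $\tilde u(\tau)$ being orthogonal to $\{v(s):s\le\tau\}\subset\{v(s):s\le t\}$). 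The deterministic part $\bar x$ is plainly deterministic. Uniqueness of the decomposition then transfers from the ambient Hilbert-space projection structure.

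Finally, to obtain \eqref{Vdecomposition} I would plug \eqref{xorthdecop} into $V_0(x,u)=\int_0^T x'Qx\,d\alpha+\int_0^T u'Ru\,dt$, expand the quadratic forms, take expectations, and discard all cross terms. The cross terms between $\bar x,\bar u$ (deterministic) and the stochastic integrals vanish because $v$ is a zero-mean martingale; the cross terms between the $k$-th and $\ell$-th stochastic integrals vanish for $k\neq\ell$ by $[v_k,v_\ell]=0$; and the cross terms involving $\tilde x,\tilde u$ vanish by the stipulated orthogonality. For the surviving diagonal stochastic term one uses the It\^o isometry $E\big\{\big(\int_0^t x_k(t,s)dv_k(s)\big)'M\big(\int_0^t x_k(t,s)dv_k(s)\big)\big\}=\int_0^t x_k(t,s)'Mx_k(t,s)\,d\beta_k(s)$ and then interchanges the $s$- and $t$-integrations (Tonelli, the integrands being nonnegative) to recognize the result as $E\big\{\int_0^T V_s(x_k(\cdot,s),u_k(\cdot,s))\,d\beta_k(s)\big\}$. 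I expect the main obstacle to be the careful justification of the stochastic Fubini interchange and of the orthogonality of $\tilde x$ to $\{v(s):s\le t\}$ under the sole hypothesis of square-integrability of $u$ and the weak assumptions on the kernels $A,C$ (hence on $G$); once those measurability-and-integrability points are settled, the rest is bookkeeping with the isometry. Since this lemma is quoted verbatim from \cite{lindquist}, it suffices to indicate these steps and cite Lemma 4.1 there for the details.
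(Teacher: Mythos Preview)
The paper does not actually prove this lemma: immediately after the statement it writes ``For a proof of this lemma, we refer the reader to \cite{lindquist}.'' Your proposal is therefore already more detailed than the paper's own treatment, and your closing remark that it suffices to cite Lemma~4.1 of \cite{lindquist} is exactly what the authors do.

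Your sketch is essentially correct, but there is one slip worth flagging since you yourself identify the orthogonality of $\tilde x(t)$ to $\{v(s):s\le t\}$ as a main obstacle. You argue that $\int_0^t G(t,\tau)\tilde u(\tau)\,d\tau$ is orthogonal to this span because ``$\tilde u(\tau)$ is orthogonal to $\{v(s):s\le\tau\}\subset\{v(s):s\le t\}$.'' But orthogonality to a \emph{smaller} set does not imply orthogonality to a \emph{larger} one; the inclusion goes the wrong way for that inference. What actually closes the gap is the martingale property: for $s>\tau$, since $\tilde u(\tau)$ is ${\mathcal V}_\tau$-measurable, $E\{v(s)\tilde u(\tau)'\}=E\{E\{v(s)\mid{\mathcal V}_\tau\}\tilde u(\tau)'\}=E\{v(\tau)\tilde u(\tau)'\}=0$. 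With that correction your outline is sound and matches the argument in \cite{lindquist}.
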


For a proof of this lemma, we refer the reader to \cite{lindquist}.

\begin{lemma}\label{innovationlem}
Let $y$ be the output process of the closed-loop system obtained after applying a deterministically well-posed feedback law $u=\pi(y)$ to the system \eqref{delaysystem}.   Then the inno\-vation process \eqref{innovation} is a Gaussian martingale, and the corresponding filtration \eqref{Vfiltration}
satisfies
\begin{equation}
\label{equivfiltrations}
{\mathcal V}_t={\mathcal Y}_t,\quad 0\leq t\leq T.
\end{equation}
\end{lemma}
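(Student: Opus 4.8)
\textbf{Proof plan for Lemma~\ref{innovationlem}.} The plan is to establish the two assertions in sequence: first that $v$ is a Gaussian martingale, and then that ${\mathcal V}_t={\mathcal Y}_t$. For the martingale property, I would start from the definition \eqref{innovation} of $v$ together with the closed-loop dynamics \eqref{delaysystem}. Writing $dv=dy-\left(\int_{t-h}^t d_sC(t,s)\hat{x}(s|t)\right)dt$ and substituting $dy=\left(\int_{t-h}^t d_sC(t,s)x(s)\right)dt+D(t)dw$, one obtains $dv=\left(\int_{t-h}^t d_sC(t,s)\tilde{x}(s|t)\right)dt+D(t)dw$, where $\tilde{x}(s|t):=x(s)-\hat{x}(s|t)$ is the filtering error. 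The standard innovations argument then shows $v$ is a ${\mathcal Y}_t$-martingale: for $t_1<t_2$, $E\{v(t_2)-v(t_1)\mid{\mathcal Y}_{t_1}\}$ involves the integral of $E\{\tilde{x}(s|t)\mid{\mathcal Y}_t\}=0$ for $s\le t$, plus the martingale increment of $\int D\,dw$, which is a ${\mathcal Y}_t$-martingale once we know (from the well-posedness hypothesis and Lemma~\ref{keylemma}) that ${\mathcal Y}_t={\mathcal Y}^0_t$, hence that $w$ contributes a genuine martingale term relative to this filtration. Gaussianity of $v$ follows because, under the hypothesis that $w$ is a Gaussian martingale and the equivalence ${\mathcal Y}_t={\mathcal Y}^0_t$, the uncontrolled output $y_0$ and hence $\hat{x}(s|t)$ (a linear functional of the Gaussian data) are jointly Gaussian, so $v$, being a linear combination of jointly Gaussian quantities, is Gaussian; its quadratic variation $[v_j,v_k]$ is absolutely continuous and deterministic, reflecting that the bracket is carried entirely by the $D(t)dw$ term, $d[v,v']=D(t)\,d[w,w']\,D(t)'$.

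Next I would establish \eqref{equivfiltrations}. The inclusion ${\mathcal V}_t\subset{\mathcal Y}_t$ is immediate from the definition \eqref{innovation}, since $v(t)$ is built causally from $y$ and from $\hat{x}(\cdot|\cdot)$, the latter being ${\mathcal Y}$-adapted by construction (it is a conditional expectation against ${\mathcal Y}_t$, realized by the distributed filter \eqref{delayfiltera}--\eqref{delayfilterb} driven by $v$, which in turn is driven by $y$). For the reverse inclusion ${\mathcal Y}_t\subset{\mathcal V}_t$, the idea is that the filter equations \eqref{delayfiltera}--\eqref{delayfilterb} express $\hat{x}(s|t)$ as a causal (linear, deterministic-kernel) functional of $v$ alone. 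Substituting this back into \eqref{innovation} rearranged as $dy=dv+\left(\int_{t-h}^t d_sC(t,s)\hat{x}(s|t)\right)dt$ exhibits $y$ as the output of a Volterra-type integral equation driven by $v$, whose kernel is continuous (built from $C$, $X$, and the Green's function $\Phi$) and hence admits a resolvent by the standard Volterra theory \cite{Smithies,Zabreyko} already invoked in the proof of Theorem~\ref{basicsepthm}. Solving that equation expresses $y$ causally in terms of $v$, giving ${\mathcal Y}_t\subset{\mathcal V}_t$.

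The main obstacle I anticipate is the reverse inclusion, specifically making rigorous that the distributed filter $X(s,t)$ genuinely realizes $\hat{x}(s|t)$ as a sample-path functional of $v$ — i.e., that \eqref{delayfiltera}--\eqref{delayfilterb} have a path-wise unique strong solution driven by $v$, so that one may legitimately substitute and invert. This is exactly the delay-system analogue of the innovations-equivalence result \eqref{eq:calYconstant} in the non-delay case, and one must be careful that the argument does not become circular (the filter is defined through ${\mathcal Y}_t$, yet we want to recover ${\mathcal Y}_t$ from $v$). The clean way around this is to appeal to the well-posedness hypothesis via Lemma~\ref{keylemma}: deterministic well-posedness of $\pi$ gives ${\mathcal Y}_t={\mathcal Y}^0_t$, and for the uncontrolled system the equivalence of $\{{\mathcal V}^0_t\}$ and $\{{\mathcal Y}^0_t\}$ is the classical linear Kalman-Bucy innovations result (its delay version is in \cite[pp.\ 120]{lindquist1}), which holds because there the filter coefficients are fixed and control-independent. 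Combining ${\mathcal V}_t={\mathcal V}^0_t$ (the control-dependent terms cancel in the innovation, just as $dv_0=dy_0-C\hat\xi_0\,dt=dy-C\hat\xi\,dt$ in Kushner's construction) with ${\mathcal Y}_t={\mathcal Y}^0_t$ and ${\mathcal V}^0_t={\mathcal Y}^0_t$ then yields \eqref{equivfiltrations} without circularity; I would present this chain of equalities explicitly as the concluding step.
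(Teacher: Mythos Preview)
Your proposal is essentially correct and, in its ``clean way'' paragraph, converges on exactly the paper's argument: one first works with the uncontrolled system to get ${\mathcal V}_t^0={\mathcal Y}_t^0$ (the paper proves this directly via a Volterra resolvent rather than citing it), then invokes Lemma~\ref{keylemma} for ${\mathcal Y}_t^0={\mathcal Y}_t$, and finally observes that the control-dependent terms cancel so that $v=v_0$, giving the chain ${\mathcal V}_t={\mathcal V}_t^0={\mathcal Y}_t^0={\mathcal Y}_t$. The only notable difference is ordering: the paper establishes the filtration equality \emph{first} and the martingale/Gaussian property \emph{last}, which sidesteps the slight circularity in your opening Gaussianity argument (asserting that $\hat x(s\,|\,t)$ is a linear functional of Gaussian data already presumes the control drops out, i.e.\ $v=v_0$, which is precisely what the ``clean way'' supplies).
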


\label{innovequiv}
\begin{proof}
As can be seen from the equation \eqref{integratedxdelay} and the remark following it, the process $y_0$ obtained by setting $u=0$ in \eqref{delaysystem} is given by
$dy_0=q_0(t)dt + D(t)dw$
for a  process $q_0$ adapted to  $\{{\mathcal W}_t\}$.   Define 
$dv_0=dy_0 - \hat{q}_0(t)dt,$
where $\hat{q_0}(t):=E\{ q_0(t)\mid {\mathcal Y}_t^0\}$. Now, $q_0$ and $w$ are jointly Gaussian, and therefore, for each $t\in [0,T]$,  the components of $\hat{q}_0(t)$ belong to the closed linear span of the components of the martingale $\{ y_0,\, t\in [0,T]\}$, and hence 
\begin{displaymath}
\hat{q}_0(t) = \int_0^t M(t,s)dy_0
\end{displaymath}
for some $L^2$-kernel $M$. Therefore, $v_0$ is Gaussian, and its natural filtration ${\mathcal V}_t^0$  satisfies ${\mathcal V}_t^0\subset{\mathcal Y}_t^0$. Now let $R$ be the resolvent of the Volterra equation with kernel $M$; i.e., the unique solution of the resolvent equation
\begin{displaymath}
R(t,s)=\int_s^tR(t,\tau)M(\tau,s)d\tau + M(t,s)
\end{displaymath}
\cite{Smithies,Zabreyko}. Then
\begin{displaymath}
\int_s^tR(t,s)dv_0(s)=\int_s^tM(t,s)dy_0(s)=\hat{q}_0(t),
\end{displaymath}
and hence ${\mathcal Y}_t^0\subset {\mathcal V}_t^0$. Consequently, in view of Lemma~\ref{keylemma}, 
${\mathcal V}_t^0= {\mathcal Y}_t^0={\mathcal Y}_t.$ 
Next observe that 
\begin{displaymath}
dy = q(t)+D(t)dw, \quad q(t):=q_0(t)+h(u)(t),
\end{displaymath}
where $h(u)$ is a causal (linear) function of the control $u$. Since $h(u)$ is adapted to $\{{\mathcal Y}_t\}$,
\begin{displaymath}
 \hat{q}(t):= \hat{q}_0(t)+h(u)(t),
\end{displaymath}
and therefore the innovation process \eqref{innovation} satisfies
$dv=dy- \hat{q}(t)dt=dy_0- \hat{q}_0(t)dt =dv_0.$
Equation \eqref{equivfiltrations} now follows. 

Finally, to prove that the innovation process $v$ is a martingale we need to show that 
\begin{displaymath}
E\{ v(s)-v(t)\mid {\mathcal V}_t\} =0 \; \text{\rm for all $s\geq t$}.
\end{displaymath} 
To this end, first note that 
\begin{equation}\label{martingalesum}
E\left\{ v(s)-v(t)\mid {\mathcal V}_t\right\}=E\left\{\int_t^s\tilde{q}(\tau)d\tau\mid {\mathcal V}_t\right\}+E\left\{\int_t^sB(\tau)dw\mid {\mathcal V}_t\right\},
\end{equation}
where $\tilde{q}(t):=q(t)-\hat{q}(t)$. Since all the processes are jointly Gaussian (the control-dependent terms have been canceled in forming $\tilde{q}$), independence is the same as orthogonality. Since $\tilde{q}(\tau)\perp{\mathcal V}_\tau\supset{\mathcal V}_t$ for $\tau\geq t$, the first term in \eqref{martingalesum} is zero. The second term can be written
\begin{displaymath}
E\left\{ E\left\{\int_t^sB(\tau)dw\mid {\mathcal W}_t\right\}\mid {\mathcal V}_t\right\},
\end{displaymath}
which is zero since $w$ is a martingale. 
\end{proof}

We are now in a position to prove Theorem~\ref{delaythm}.  Lemma~\ref{innovationlem} shows that the innovation process \eqref{innovation} is a martingale. It is no restriction to assume that $E\{v(t)v(t)'\}$ is diagonal; if it is not, we just normalize the  innovation process by replacing $v(t)$ by $R(t)^{-1/2}v(t)$, where $R(t):=E\{v(t)v(t)'\}>0$. Then we set $\beta_k(t):=E\{ v_k^2\}$, $k=1,2,\dots,p$. Since ${\mathcal V}_t={\mathcal Y}_t$ for $t\in [0,T]$ (Lemma~\ref{innovationlem}), admissible controls take the form \eqref{uorthdecop}. Moreover, the process $\hat{x}(t):=E\{ x(t)\mid {\mathcal Y}_t\}$ is adapted to $\{{\mathcal V}_t\}$, and hence, analogously to \eqref{uorthdecop}, it has the decomposition
\begin{equation}
\label{xhatdecomp}
\hat{x}(t)=\bar{x}(t)+\sum_{k=1}^p\int_0^tx_k(t,s)dv_k(s)+\tilde{x}(t),
\end{equation}
which now will take the place of \eqref{xorthdecop} in Lemma~\ref{decomplem}. As before, let $\hat{x}_0$ be the process $\hat{x}$ obtained by setting $u=0$. By Lemma~\ref{keylemma}, $\hat{x}_0$ does not depend on the control $u$. Moreover, since $x_0$ and $v$ are jointly Gaussian,
\begin{equation}
\label{x0hatdecomp}
\hat{x}_0(t)=\bar{x}_0(t)+\sum_{k=1}^p\int_0^tx_k^0(t,s)dv_k(s),
\end{equation}
replacing \eqref{x0orthdecop} in Lemma~\ref{decomplem}. Moreover, 
\begin{displaymath}
E\{ V_0(x,u)\}=E\{ V_0(\hat{x},u)\}+E\{ V_0(x-\hat{x},0)\},
\end{displaymath}
where the last term does not depend on the control, since $x-\hat{x}=x_0-\hat{x}_0$. Hence, by Lemma~\ref{decomplem}, the problem is now reduced to finding a control \eqref{uorthdecop} and a state process \eqref{xhatdecomp} minimizing $E\{ V_0(\hat{x},u)\}$ subject to 
\begin{subequations}
\begin{eqnarray}
\bar{x}(t)  &= & \bar{x}_0(t) +\int_0^tG(t,\tau) \bar{u}(\tau)d\tau \label{hateqn(a)}\\
x_k(t,s) &= &x_k^0(t,s) +\int_s^tG(t,\tau)u_k(\tau,s)d\tau\label{hateqn(b)} \\
\tilde{x}(t)&=&\int_0^tG(t,\tau)\tilde{u}(\tau)d\tau\label{hateqn(c)}
\end{eqnarray}
\end{subequations}
where the last equation has been modified to account for the fact that $\tilde{x}_0=0$.  Clearly, this problem decomposes into several distinct problems. First $\bar{u}$ need to chosen so that $V_0(\bar{x},\bar{u})$ is minimized subject to  \eqref{hateqn(a)}. This is a deterministic control problem with the feedback solution
\begin{equation}
\label{ubarfeedback}
\bar{u}(t)=\int_{t-h}^t d_\tau K(t,\tau)\bar{x}(\tau),
\end{equation}
where $K$ is as in \eqref{delaycontrol}. Secondly, for each $s\in [0,T]$ and $k=1,2,\dots,p$, $u_k(t,s)$ has to be chosen so as to minimize  $V_s(x_k(\cdot,s),u_k(\cdot,s))$ subject to \eqref{hateqn(b)}. This again is a deterministic control problem with the optimal feedback solution
\begin{equation}
\label{u_kfeedback}
u_k(t,s)=\int_{t-h}^t d_\tau K(t,\tau)x_k(\tau,s).
\end{equation}
Finally, $\tilde{u}$ should be chosen so as to minimize $E\{V_0(\tilde{x},\tilde{u}\})$ subject to  \eqref{hateqn(c)}. This problem clearly has the solution $\tilde{u}=0$, and hence  $\tilde{x}=0$ as well.  Combining these results inserting them into \eqref{uorthdecop} then yields the optimal feedback control 
\begin{equation*}
u(t)=\int_{t-h}^t d_\tau K(t,\tau)\big(\bar{x}(\tau)+ \sum_{k=1}^p\int_0^tx_k(t,s)dv_k(s)\big)
\end{equation*}
It remains to show that this is exactly the same as \eqref{delaystochasticcontrol}; i.e., that
\begin{equation}
\label{xhat}
\hat{x}(\tau|t)=\bar{x}(\tau)+ \sum_{k=1}^p\int_0^tx_k(t,s)dv_k(s).
\end{equation}
To this end, first note that, since the optimal control is linear in $dv$, $\hat{x}(\tau|t)$ will take the form
\begin{displaymath}
\hat{x}(\tau|t)=\bar{x}(\tau)+\int_0^t X_t(\tau,s)dv(s),
\end{displaymath}
where $\bar{x}(\tau)=E\{ x(\tau)\}$, the same as in \eqref{xhat}. Clearly $E\{[x(\tau)-\hat{x}(\tau|t)]v(s)'\}=0$ for $s\in [0,t]$, and therefore
\begin{displaymath}
E\{x(\tau)v(s)'\}=E\{\hat{x}(\tau|t)v(s)'\}=\int_0^s X_t(\tau,s)d\beta(s),
\end{displaymath}
showing that the kernel $X_t$ does not depend on $t$; hence  this index will be dropped. Now, setting $\tau=t$, comparing  with \eqref{xhatdecomp} and noting that $\tilde{x}=0$, we see that $X(t,s)$ is the matrix with columns $x_1(t,s),x_2(t,s),\dots,x_p(t,s)$, establishing \eqref{xhat}, which from now we shall write
\begin{equation}
\label{xhatmod}
\hat{x}(\tau|t)=\bar{x}(\tau)+ \int_0^tX(\tau,s)dv(s).
\end{equation}
 Hence, \eqref{delaystochasticcontrol} is the optimal control, as claimed. Moreover, 
 \begin{displaymath}
\hat{x}(\tau|t)=\hat{x}(s)+\int_s^tX(\tau,s)dv(s),
\end{displaymath}
which yields  \eqref{delayfiltera}. To derive \eqref{delayfilterb}, follow the procedure in \cite{lindquist1}.

It remains to show that the optimal control law \eqref{delaystochasticcontrol} is deterministically well-posed. To this end, it is no restriction to assume that $\bar{x}_0\equiv 0$ so that all processes have zero mean. Then it follows from \eqref{delaystochasticcontrol} and  the  unsymmetric Fubini Theorem of Cameron and Martin \cite{CM}  that 
\begin{displaymath}
u(t)=\int_0^tP(t,s)dv(s), \quad \mbox{where\;\;} P(t,s)=\int_{t-h}^t d_\tau K(t,\tau)X(\tau,s)d\tau,
\end{displaymath}
and likewise  from  \eqref{innovation} that
\begin{displaymath}
dv=dy -\int_0^tS(t,s)dv(s)dt, \quad \mbox{where\;\;}  S(t,s)=\int_{t-h}^t d_\tau C(t,\tau)X(\tau,s)d\tau.
\end{displaymath}
The function $S$ is a Volterra kernel and therefore the Volterra resolvent equation
\begin{displaymath}
V(t,s)=\int_s^tV(t,\tau)S(\tau,s)d\tau +S(t,s)
\end{displaymath}
has a unique solution $V$, from which it follows that 
\begin{displaymath}
dv=dy - \int_0^t V(t,s)dy(s).
\end{displaymath}
Then the optimal control law is given by \eqref{pioptdy}, where now $M$ is given by
\begin{displaymath}
M(t,s)=P(t,s)- \int_s^tP(t,\tau)V(\tau,s)d\tau.
\end{displaymath}
Now, for the optimal control law, $s\mapsto X(t,s)$ is of bounded variation for each $t$ \cite{lindquist1}, and hence so is $s\mapsto M(t,s)$. Hence $\pi_{\rm opt}$ can be defined samplewise as in  \eqref{piopty}.
To complete the proof that the optimal feedback loop is deterministically well-posed we proceed exactly as in the proof of Theorem~\ref{basicsepthm}, noting that in the present setting
\begin{displaymath}
\frac{\partial G}{\partial s}(t,s)=\int_s^td_\tau\begin{bmatrix}A(t.\tau)\\C(t,\tau)\end{bmatrix}\Phi(\tau,s)B_1(s),
\end{displaymath}
where $\Phi(t,s)$ is the transition matrix of $A$ \cite[p.101]{lindquist1}. 

\begin{remark}
It was shown in \cite{lindquist} that, in the case of complete state information ($y=x$), the control \eqref{delaycontrol} is optimal even when $w$ is an arbitrary (not necessarily Gaussian) martingale. 
\end{remark}

\section{Conclusions}

In studying the literature on the separation principle of stochastic control, one encounters many expositions where subtle difficulties are overlooked and inadmissible shortcuts are taken.
On the other hand, for most papers and monographs that provide rigorous derivations, one is struck by the level of mathematical sophistication and technical complexity, which makes the material hard to include in standard textbooks in a self-contained fashion.
It is our hope that our use of deterministic well-posedness provides an alternative mechanism for understanding the separation principle that is more palatable and transparent to the engineering community, while still rigorous. The new insights offered by the approach allow us to establish the separation principle also for systems driven by non-Gaussian martingale noise. However, in this more general framework the key issue of establishing well-posedness for particular control systems is challenging and more work needs to be done.

\section*{Acknowledgement}
We are indebted to an anonymous referee for significant input which has helped us improved the paper considerably.

\section{Appendix}\label{Appendix_Example}

Consider the ``uncontrolled'' observation process
$ 
dy_0=v(t)dt+\sigma dw.
$ 
If $d{\mathbb P}$ denotes the law of $(\theta,\tau,w)$ and $\Lambda(t)= e^{\sigma^{-2}\int_0^tv(s)dy_0-\frac12\sigma^{-2}\int_0^tv(s)^2ds}$, then, under a new measure
$d{\mathbb Q}:=\Lambda(T)^{-1}d{\mathbb P}$,
$y_0$ becomes a Wiener process while the law of $v$ (i.e., of $\theta$ and $\tau$) is the same as before. Under $d{\mathbb Q}$, the two processes $y_0$ and $v$ are independent. The conditional expectation is now given by (Bayes' formula \cite[page 174]{vanHandel})
\begin{eqnarray}\nonumber
E_{\mathbb P}
(v(t)|{\mathcal Y}_t) &=&
\frac{E_{{\mathbb Q}}(v(t)\Lambda(t)|{\mathcal Y}_t)}{E_{{\mathbb Q}}(\Lambda(t)|{\mathcal Y}_t)}=
 \frac{E_{{\mathbb Q}}(\theta I_{t\geq \tau}
e^{\sigma^{-2}\int_0^t\theta I_{s\geq \tau}dy_0 - \frac12 \sigma^{-2}\int_0^t I_{s\geq \tau}ds}|{\mathcal Y}_t)}
{E_{{\mathbb Q}}(
e^{\sigma^{-2}\int_0^t\theta I_{s\geq \tau}dy_0 - \frac12\sigma^{-2} \int_0^t I_{s\geq \tau}ds}|{\mathcal Y}_t)}\nonumber\\
&=& \frac{E_{{\mathbb Q}}(
I_{t\geq \tau} e^{(y_0(t)-y_0(t\wedge \tau) - \frac12(t-\tau)^+)/\sigma^2}-I_{t\geq \tau} e^{(-(y_0(t)-y_0(t\wedge \tau)) - \frac12(t-\tau)^+)/\sigma^2}|{\mathcal Y}_t)}
{E_{{\mathbb Q}}(
e^{(y_0(t)-y_0(t\wedge \tau) - \frac12(t-\tau)^+)/\sigma^2}-e^{(-(y_0(t)-y_0(t\wedge \tau)) - \frac12(t-\tau)^+)/\sigma^2}|{\mathcal Y}_t)}. \label{conditionalexp}
\end{eqnarray}
Here $t\wedge \tau:=\min(t,\tau)$, $I_{t\geq \tau}(t)=1$ when $t\geq \tau$ and $0$ otherwise, and $(t-\tau)^+=(t-\tau)I_{t\geq \tau}$. Note that $v(t)=\theta I_{t\geq \tau}(t)$. For convenience we define $\rho(t):=E_{\mathbb P}
(v(t)|{\mathcal Y}_t)$ and
\begin{eqnarray*}
\Sigma(t)&:=&\int_0^t (e^{y_0(t)-y_0(s) - \frac12(t-s))/\sigma^2}ds, \mbox{ and }\\
\bar\Sigma(t)&:=&\int_0^t  e^{(-(y_0(t)-y_0(s)) - \frac12(t-s))/\sigma^2}ds.
\end{eqnarray*}
From \eqref{conditionalexp},
$\rho(t) = N(t)/D(t)$
where
\begin{eqnarray*}
N(t) &=& \Sigma(t)-\bar\Sigma(t)\\
D(t) &=& \Sigma(t)+\bar\Sigma(t)+2(T-t).
\end{eqnarray*}
By first noting that $\Sigma$ and $\bar\Sigma$ satisfy the stochastic differential equations
\begin{eqnarray*}
d\Sigma &=& \Sigma(t) dy_0 + dt\\
d\bar\Sigma &=& -\bar\Sigma(t) dy_0 + dt,
\end{eqnarray*}
respectively, the It\^o rule applied to the expression $N(t)/D(t)$ for the conditional expectation gives the filter equations (setting $\phi=D^{-1}$)
\begin{subequations}
\begin{eqnarray}
d\rho &=& \sigma^{-2}(1-\rho(t)^2- 2(T-t)\phi(t))(dy_0-\rho dt)\\
d\phi&=& -\sigma^{-2}\phi(t)\rho(t) (dy_0(t) -\rho(t)dt).
\end{eqnarray}
\end{subequations}
Finally, noting that the innovation $dy_0-\rho dt$ is equal to $dy-\hat x dt$ for the controlled system, we obtain the filter equations \eqref{control_law}.

\spacingset{1.2}

\end{document}